\documentclass[11pt,english,a4paper]{article}
\usepackage[centertags]{amsmath}
\usepackage{amssymb}
\usepackage{amsthm}
\usepackage{makeidx}

\usepackage[margin=1.5in]{geometry}
\usepackage{float}
\usepackage{lscape}
\usepackage{graphicx}
\usepackage{amsmath}
\usepackage[table]{xcolor}

\newtheorem{theorem}{Theorem}[section]

\newtheorem{lemma}[theorem]{Lemma}

\newtheorem{definition}[theorem]{Definition}
\newtheorem{remark}[theorem]{Remark}

\numberwithin{equation}{section} 

\begin{document}

\title{About the (Hahn) classical character of $2$-orthogonal solutions of two families of differential equations of third order}

\author{T. A. Mesquita\footnote{Corresponding author (taugusta.mesquita@gmail.com)} }
\date{}
\maketitle

\begin{center}
{\scriptsize
Escola Superior de Tecnologia e Gest\~ao, Instituto Polit{\'e}cnico de Viana do Castelo, Rua Escola Industrial e Comercial de Nun' \'Alvares, 4900-347, Viana do Castelo, Portugal, \&\\
Centro de Matem\'{a}tica da Universidade do Porto, Rua do Campo Alegre, 687, 4169-007 Porto, Portugal\\
}
\end{center}

\begin{abstract}
Considering a differential operator of third order that does not increase the degree of polynomials, we analyse some properties of elements of the dual space of $2$-orthogonal polynomial eigenfunctions.
In two classes of such generic operator, we prove that a $2$-orthogonal polynomial solution fulfils Hahn's property.
\end{abstract}

 \textbf{Keywords and phrases}: $d$-orthogonal polynomials,\, differential operators, \, two-orthogonal polynomials,\, Hahn's property.\\
 \textbf{2010 Mathematics Subject Classification}: 42C05 \,,\,  33C45 \,,\, 33D45

\section{Basic definitions and notation}
Let $\mathcal{P}$ be the vector space of polynomials with coefficients in $\mathbb{C}$ and let
$\mathcal{P}^{\prime}$ be its topological dual space. We denote by $\langle u ,p\rangle$ the action of the form or linear functional $u \in\mathcal{P}^{\prime }$ on $p\in\mathcal{P}$. In particular,
$\langle u, x^{n}\rangle:=\nolinebreak\left( u \right) _{n},n\geq 0$ represent the moments of $u$.
In the following, we will call polynomial sequence (PS) to any sequence
${\{P_{n}\}}_{n \geq 0}$ such that $\deg P_{n}= n,\; \forall n \geq 0$.
We will also call monic polynomial sequence (MPS) to a PS so that all polynomials have leading coefficient equal to one.

If ${\{P_{n}\}}_{n \geq 0}$ is a MPS, there exists a unique sequence
$\{u_n\}_{n\geq 0}$, $u_n\in\mathcal{P}^{\prime}$, called the dual sequence of $\{P_{n}\}_{n\geq 0}$, such that,
\begin{equation}\label{SucDual}
<u_{n},P_{m}>=\delta_{n,m}\ , \ n,m\ge 0.
\end{equation}

%%%%%%%%%%%%%%%%%%%%%%%%%
%
\noindent On the other hand, given a MPS ${\{P_{n}\}}_{n \geq 0}$, the expansion of $xP_{n+1}(x)$, defines sequences in $\mathbb{C}$, ${\{\beta_{n}\}}_{n \geq 0}$ and
$\{\chi_{n,\nu}\}_{0 \leq \nu \leq n,\; n \geq 0},$ such that
\begin{eqnarray}
&&P_{0}(x)=1, \;\; P_{1}(x)=x-\beta_{0},\label{divisao_ci}\\
 && xP_{n+1}(x)= P_{n+2}(x)+ \beta_{n+1}P_{n+1}(x)+\sum_{\nu=0}^{n}\chi_{n,\nu}P_{\nu}(x).\label{divisao}
\end{eqnarray}
This relation is usually called the structure relation of  ${\{P_{n}\}}_{n \geq 0}$, and ${\{\beta_{n}\}}_{n \geq 0}$ and
$\{\chi_{n,\nu}\}_{0 \leq \nu \leq n,\; n \geq 0}$ are called the structure coefficients (SCs) \cite{theoriealgebrique}.
\par In this paper, we will deal with $2$-orthogonal MPSs, hence it is important to mention in this section the definition of $d$-orthogonal MPSs (where $d$ is a positive integer) along with basic definitions, in particular regarding the differential operator $D$.
\medskip

\begin{definition}\cite{Douak2classiques,Maroni-Toulouse,Van-Iseg-1987}
Given $\Gamma^{1}, \Gamma^{2},\ldots, \Gamma^{d} \in \mathcal{P}^{\prime}$, $d \geq 1$, the polynomial sequence ${\{P_{n}\}}_{n \geq 0}$ is called d-orthogonal polynomial sequence (d-OPS) with respect to  $\Gamma=(\Gamma^{1},\ldots, \Gamma^{d} )$ if it fulfils 
\begin{equation}\label{d ortogonal}
 \langle \Gamma^{\alpha}, P_{m}P_{n}  \rangle=0, \;\; n \geq md +\alpha,\,\; m \geq 0,
\end{equation}
 \begin{equation}\label{d ortogonal regular}
 \langle \Gamma^{\alpha}, P_{m}P_{md+\alpha-1}  \rangle \neq 0, \;\; m \geq 0,
\end{equation}
for each integer $\alpha=1,\ldots, d$.

%
%%%%%%%%%%
\begin{lemma}\cite{variations}\label{lema1}
For each $u \in \mathcal{P}^{\prime}$ and each $m \geq 1$, the two following propositions are equivalent.
\begin{description}
\item[a)]  $\langle u, P_{m-1} \rangle \neq 0,\:\: \langle u, P_{n} \rangle=0,\: n \geq m$.
\item[b)] $\exists \lambda_{\nu} \in \mathbb{C},\:\: 0 \leq \nu \leq m-1,\:\: \lambda_{m-1}\neq 0$ such that
$u=\sum_{\nu=0}^{m-1}\lambda_{\nu} u_{\nu}$. \end{description}
\end{lemma}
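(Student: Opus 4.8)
The plan is to derive both implications directly from the biorthogonality relation \eqref{SucDual}, $\langle u_n, P_m\rangle=\delta_{n,m}$, after first isolating the single structural fact that drives the whole equivalence: every form $u\in\mathcal{P}'$ coincides with the (weakly convergent) series $\sum_{\nu\geq 0}\langle u,P_\nu\rangle\,u_\nu$. Once this expansion is available, condition (a) says precisely that the series terminates at index $m-1$ with a nonzero last coefficient, which is exactly condition (b); so the two descriptions of $u$ are literally the same object read in two different ways.

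First I would establish the auxiliary expansion. Fix $u\in\mathcal{P}'$ and set $v=\sum_{\nu\geq 0}\langle u,P_\nu\rangle\,u_\nu$, understood as a linear form whose value on a polynomial is computed term by term. Because $\{P_n\}_{n\geq 0}$ is an MPS, it is a basis of $\mathcal{P}$, so any $p\in\mathcal{P}$ can be written as a finite combination $p=\sum_{k=0}^{N}c_k P_k$. Testing $v$ against $p$ and using \eqref{SucDual} collapses the series to a finite sum,
\begin{equation*}
\langle v,p\rangle=\sum_{\nu\geq 0}\langle u,P_\nu\rangle\,\langle u_\nu,p\rangle=\sum_{\nu=0}^{N}\langle u,P_\nu\rangle\,c_\nu=\langle u,\textstyle\sum_{k=0}^{N}c_k P_k\rangle=\langle u,p\rangle,
\end{equation*}
so $v=u$ in $\mathcal{P}'$.

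With the expansion in hand I would dispatch the two implications quickly. For (b) $\Rightarrow$ (a), if $u=\sum_{\nu=0}^{m-1}\lambda_\nu u_\nu$ with $\lambda_{m-1}\neq 0$, then pairing with $P_n$ and invoking \eqref{SucDual} gives $\langle u,P_n\rangle=\lambda_n$ for $0\leq n\leq m-1$ and $\langle u,P_n\rangle=0$ for $n\geq m$; in particular $\langle u,P_{m-1}\rangle=\lambda_{m-1}\neq 0$. For (a) $\Rightarrow$ (b), I would substitute the hypotheses $\langle u,P_n\rangle=0$ ($n\geq m$) and $\langle u,P_{m-1}\rangle\neq 0$ into the expansion $u=\sum_{\nu\geq 0}\langle u,P_\nu\rangle u_\nu$; every term of index $\geq m$ vanishes, leaving $u=\sum_{\nu=0}^{m-1}\langle u,P_\nu\rangle u_\nu$ with $\lambda_\nu:=\langle u,P_\nu\rangle$ and $\lambda_{m-1}\neq 0$.

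I expect the only genuine subtlety to be the auxiliary expansion step, not the implications themselves: one must check that $v$ is a well-defined element of $\mathcal{P}'$ despite being written as an infinite series of forms. The justification rests entirely on the fact that each polynomial has finite degree, so for any fixed test polynomial only finitely many $u_\nu$ act nontrivially; everything after that reduces to a mechanical application of \eqref{SucDual}.
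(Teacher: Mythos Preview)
Your argument is correct. The expansion $u=\sum_{\nu\geq 0}\langle u,P_\nu\rangle\,u_\nu$ is well-defined exactly for the reason you give (each test polynomial has finite degree, so by \eqref{SucDual} only finitely many terms survive), and both implications then follow immediately.

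As for comparison: the paper does not supply a proof of this lemma at all; it is quoted from \cite{variations} and used as a black box. Your write-up is the standard proof one finds for this statement, so there is nothing to contrast.
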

%%%%%%%%%
%%%%%%%%%

The conditions (\ref{d ortogonal}) are called the d-orthogonality conditions and the conditions (\ref{d ortogonal regular}) are called the regularity conditions. In this case, the functional $\Gamma$, of dimension $d$, is said regular.
\end{definition}
The  $d$-dimensional functional $\Gamma$ is not unique. Nevertheless, from Lemma \ref{lema1}, we have:
$$\Gamma^{\alpha}=\sum_{\nu=0}^{\alpha-1}\lambda_{\nu}^{\alpha} u_{\nu}, \;\; \lambda_{\alpha-1}^{\alpha} \neq 0, \; 1\leq \alpha \leq d.$$
Therefore, since $U=(u_{0},\ldots, u_{d-1} )$ is unique, we use to consider the canonical functional of dimension $d$, $U=(u_{0},\ldots, u_{d-1} )$, saying that
${\{P_{n}\}}_{n \geq 0}$ is d-OPS ($d\geq 1$) with respect to
$U=(u_{0},\ldots, u_{d-1})$ if
$$ \langle u_{\nu}, P_{m}P_{n}  \rangle=0, \;\; n \geq md +\nu+1,\,\; m \geq 0,$$
$$  \langle u_{\nu}, P_{m}P_{md+\nu}  \rangle \neq 0, \;\; m \geq 0,$$
for each integer $\nu=0,1,\ldots, d-1$.
It is important to remark that when $d=1$ we meet again the notion of regular orthogonality. Furthermore, the $d$-orthogonality corresponds to the generalisation of the well-known recurrence relation fulfilled by the orthogonal polynomials, as the next Theorem recalls.

\begin{theorem}\cite{Maroni-Toulouse} \label{recurrence relation for d ortho}
Let ${\{P_{n}\}}_{n \geq 0}$ be a MPS. The following assertions are equivalent:
\begin{description}
\item[a)]  ${\{P_{n}\}}_{n \geq 0}$ is $d$-orthogonal with respect to $U=(u_{0},\ldots, u_{d-1})$.
\item[b)] ${\{P_{n}\}}_{n \geq 0}$ satisfies a $(d+1)$-order recurrence relation ($d \geq 1$):
$$P_{m+d+1}(x)=(x-\beta_{m+d})P_{m+d}(x)-\sum_{\nu=0}^{d-1}\gamma_{m+d-\nu}^{d-1-\nu}P_{m+d-1-\nu}(x), \:\: m \geq 0,$$
with initial conditions
$$P_{0}(x)=1,\:\: P_{1}(x)=x-\beta_{0}\;\: \textrm{and if }d \geq 2:$$
$$P_{n}(x)=(x-\beta_{n-1})P_{n-1}(x)-\sum_{\nu=0}^{n-2}\gamma_{n-1-\nu}^{d-1-\nu}P_{n-2-\nu}(x), \:\: 2 \leq n \leq  d,$$
and regularity conditions: $\gamma_{m+1}^{0} \neq 0,\: \; m \geq 0.$
\end{description}
\end{theorem}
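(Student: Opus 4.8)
The plan is to establish the two implications separately, using as the central object the structure relation (\ref{divisao}) --- which holds for \emph{every} MPS --- together with the dual sequence $\{u_n\}_{n\ge 0}$ and its characterisation in Lemma \ref{lema1}. The guiding observation is that the claimed $(d+1)$-order recurrence is nothing but the structure relation (\ref{divisao}) in which all coefficients $\chi_{n,\nu}$ with $\nu\le n-d$ vanish and the lowest surviving coefficient never vanishes. Thus the proof splits naturally into controlling the \emph{bandwidth} of (\ref{divisao}) and controlling the \emph{non-degeneracy} of its lowest coefficient.

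For the implication \textbf{a)} $\Rightarrow$ \textbf{b)}, since $\{P_n\}$ is a MPS with dual sequence $\{u_n\}$, projecting (\ref{divisao}) onto $u_\nu$ gives $\chi_{n,\nu}=\langle u_\nu, xP_{n+1}\rangle$ for $0\le\nu\le n$. Writing $x=P_1+\beta_0$ from (\ref{divisao_ci}) yields $\langle u_\nu, xP_{n+1}\rangle=\langle u_\nu, P_1P_{n+1}\rangle+\beta_0\,\delta_{\nu,n+1}$. The $d$-orthogonality conditions (\ref{d ortogonal}) in canonical form, $\langle u_\nu, P_1P_{j}\rangle=0$ for $j\ge d+\nu+1$, applied with $j=n+1$, together with $\delta_{\nu,n+1}=0$ in the relevant range, force $\chi_{n,\nu}=0$ for every $\nu\le n-d$. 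Hence (\ref{divisao}) collapses to a relation involving only $P_{n+2},P_{n+1},\dots,P_{n-d+1}$, which is exactly the $(d+1)$-order recurrence after the relabelling $\chi_{m+d-1,\,m+d-1-\nu}=\gamma^{d-1-\nu}_{m+d-\nu}$; the stated initial conditions, for which fewer than $d$ lower terms are present, then follow by inspection.

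It remains to match the regularity statements, and this is where I expect the main obstacle. The lowest coefficient is $\gamma^0_{k+1}=\chi_{k+d-1,k}=\langle u_k,P_1P_{k+d}\rangle$ (using $\langle u_k,P_{k+d}\rangle=0$), so I must show it is non-zero for every $k\ge 0$. For $0\le k\le d-1$ this is immediate from (\ref{d ortogonal regular}) with $m=1$. For $k\ge d$ the functional $u_k$ is no longer canonical, and the link to (\ref{d ortogonal regular}) must be routed through the auxiliary forms $P_qu_\nu$ defined by $\langle P_qu_\nu,f\rangle=\langle u_\nu,P_qf\rangle$: writing $k=qd+\nu$ with $0\le\nu\le d-1$, the $d$-orthogonality of $\{P_n\}$ shows that $P_qu_\nu$ satisfies condition \textbf{a)} of Lemma \ref{lema1} at index $qd+\nu+1$, whence $P_qu_\nu=\langle u_\nu,P_qP_{qd+\nu}\rangle\,u_{qd+\nu}+\sum_{i<qd+\nu}\lambda_i u_i$ with a non-zero leading coefficient. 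Substituting this into $\langle u_k,P_1P_{k+d}\rangle$ and iterating produces a product-type relation between the regularity forms $\langle u_\nu,P_qP_{qd+\nu}\rangle$ and the coefficients $\gamma^0_{k+1}$, so that (\ref{d ortogonal regular}) delivers $\gamma^0_{k+1}\ne 0$. Carrying out this induction while tracking the lower-order remainder $\sum_i\lambda_i u_i$ is the technical heart of the argument.

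For the converse \textbf{b)} $\Rightarrow$ \textbf{a)} I would argue \`a la Favard: take $u_0,\dots,u_{d-1}$ to be the first $d$ elements of the dual sequence of the MPS generated by the recurrence, and verify (\ref{d ortogonal}) and (\ref{d ortogonal regular}) by induction on $m$. The recurrence reduces $\langle u_\nu,P_mP_n\rangle$ to lower-degree expressions: the vanishing for $n\ge md+\nu+1$ follows from the bandwidth of the recurrence and the induction hypothesis, while the boundary values $\langle u_\nu,P_mP_{md+\nu}\rangle$ accumulate exactly the non-zero factors $\gamma^0_{j+1}$ and hence remain non-zero. The bookkeeping mirrors the product relation met in the first direction, so the same non-degeneracy computation underlies both implications.
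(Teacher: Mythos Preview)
The paper does not supply its own proof of this theorem: it is quoted from \cite{Maroni-Toulouse} as a known result, so there is no argument in the text to compare your sketch against.

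That said, your outline contains a concrete gap in the direction \textbf{a)} $\Rightarrow$ \textbf{b)}. You compute $\chi_{n,\nu}=\langle u_\nu, P_1P_{n+1}\rangle+\beta_0\delta_{\nu,n+1}$ and then invoke the canonical $d$-orthogonality conditions $\langle u_\nu,P_1P_j\rangle=0$ for $j\ge d+\nu+1$ to kill $\chi_{n,\nu}$ for \emph{every} $\nu\le n-d$. But those conditions are only available for $\nu\in\{0,\dots,d-1\}$; as soon as $n\ge 2d$ the range $0\le\nu\le n-d$ contains indices $\nu\ge d$, for which $u_\nu$ is not one of the canonical functionals and (\ref{d ortogonal}) says nothing directly. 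Your argument, as written, therefore does not establish the truncation of the structure relation beyond the first $d$ rows.

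The fix is exactly the device you introduce later for the regularity part: write $u_\nu$ as a finite combination of the forms $P_m u_\alpha$ with $\alpha\in\{0,\dots,d-1\}$ and $md+\alpha\le\nu$ (Lemma~\ref{lema1} gives this with nonzero top coefficient), and then evaluate $\langle P_m u_\alpha, xP_{n+1}\rangle=\langle u_\alpha, (xP_m)P_{n+1}\rangle$. Expanding $xP_m=\sum_{j\le m+1}c_jP_j$ and using the full $d$-orthogonality (all $m$, not just $m=1$) one checks that $n+1\ge jd+\alpha+1$ whenever $j\le m+1$ and $md+\alpha\le\nu\le n-d$, so every term vanishes. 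In short, the auxiliary-form idea must be used twice --- once for the vanishing of the $\chi_{n,\nu}$ and once for the nonvanishing of $\gamma^0_{k+1}$ --- not only in the second place. Your plan for \textbf{b)} $\Rightarrow$ \textbf{a)} via a Favard-type induction is standard and sound.
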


The classical orthogonal polynomials fulfil many important properties (e.g. \cite{Chihara}) and they are characterised by the Hahn's property, that is to say, the monic polynomial sequence obtained through the standard differentiation $D$,  $$\{ (n+1)^{-1}DP_{n+1}(x)\}_{n \geq 0}\, ,$$ is also orthogonal. 
The $d$-orthogonal MPSs fulfilling this property are also called classical $d$-orthogonal MPSs in the Hahn's sense.

\begin{definition}\cite{Douak2classiques}
A PS ${\{P_{n}\}}_{n \geq 0}$ is $d$-symmetric if it fulfils
$$P_{n}(\xi_{k}x)=\xi_{k}^{n}P_{n}(x),\;\; n \geq 0, \;\; k=1,2,\ldots, d,$$
where $\xi_{k}=\exp\left( \frac{2ik\pi}{d+1}\right),\;\;
k=1,\ldots,d, \,\; \xi_{k}^{d+1}=1$.

If $d=1$, then $\xi_{1}=-1$ and we meet the definition of a symmetric PS in which we have the following property $P_{n}(-x)=(-1)^{n}P_{n}(x),\;\; n \geq 0.$
\end{definition}

Finally, given $\varpi \in \mathcal{P}$ and $u \in \mathcal{P}'$, the form $\varpi u$, called the
left-multiplication of $u$ by the polynomial $\varpi$, is defined by 
\begin{equation}\label{poly-times-u}
\langle \varpi u,p \rangle= \langle u,\varpi p \rangle,\:\:\: \forall p \in \mathcal{P},
\end{equation}
and the transpose of the derivative operator on $\mathcal{P}$ defined by $p \rightarrow (Dp)(x)=p^{\prime}(x),$ is the following (cf. \cite{theoriealgebrique}):
\begin{equation}\label{funcionalDu}
u \rightarrow Du:\;\;\langle Du,p \rangle=-\langle u,p^{\prime} \rangle ,\:\:\: \forall p \in \mathcal{P},\end{equation}
so that we can retain the usual rule of the derivative of a product when applied to the left-multiplication of a form by a polynomial. Indeed, it is easily established that 
\begin{align}
\label{derivada do produto} &D(pu)=p^{\prime}u+pD(u).
\end{align}

\section{Differential operators on $\mathcal{P}$ and technical identities}\label{operator}

In this section, we summarise the operational point of view proposed in \cite{PM-TAM-2019} that will be followed along the text. The notation here recalled help us in dealing with an operator $J$ that we may consider in the search of polynomial solutions of many different differential identities.

\medskip

Given a sequence of polynomials $\{a_{\nu}(x)\}_{ \nu \geq 0}$, let us consider the following linear mapping $J: \mathcal{P} \rightarrow \mathcal{P}$ (cf. \cite{PM-Korean-2005}, \cite{Pincherle}).
\begin{equation} \label{operatorJ}
J= \sum_{\nu \geq 0} \frac{a_{\nu}(x)}{\nu!} D^{\nu},\quad \deg a_{\nu} \leq \nu,\quad \nu \geq 0.
\end{equation}
Expanding $a_{\nu}(x)$ as follows:
$$a_{\nu}(x)=\sum_{i=0}^{\nu} a_{i}^{[\nu]}x^{i}, $$
and recalling that $ D^{\nu}\left( \xi ^n \right) (x)= \frac{n!}{(n-\nu)!} x^{n-\nu} $, we get the next identities about $J$:
\begin{eqnarray}
\label{Jx^n-short}&& J\left( \xi^n \right) (x) = \sum_{\nu= 0}^{n} a_{\nu} (x) 
\binom{n}{\nu} x^{n-\nu}\\
\label{Jx^n} && J\left( \xi^n \right) (x) = \sum_{\tau= 0}^{n} \left( \sum_{\nu=0}^{\tau}\binom{n}{n-\nu} a_{\tau-\nu}^{[n-\nu]} \right) x^{\tau},\quad n \geq 0.
\end{eqnarray}
Most in particular, a linear mapping $J$ is an isomorphism if and only if 
\begin{equation}\label{iso-conditions}
\deg \left( J\left( \xi^n  \right)(x) \right)= n\;, \;\; n \geq 0, \; \; \textrm{and}\;\; J\left( 1  \right)(x) \neq 0.
\end{equation}

\begin{lemma}\label{lemmaPascal} \cite{PM-TAM-2019}
For any linear mapping $J$, not increasing the degree, there exists a unique sequence of polynomials $\{a_{n}\}_{n \geq 0}$, with $\deg a_{n} \leq n$, so that $J$ is read as in \eqref{operatorJ}. Further, the linear mapping $J$ is an isomorphism of $\mathcal{P}$ if and only if
\begin{equation}\label{lambdan}
 \sum_{\mu=0}^{n}\binom{n}{\mu} a_{\mu}^{[\mu]} \neq 0 , \quad n \geq 0. 
 \end{equation}
\end{lemma}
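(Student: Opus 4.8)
The plan is to treat the two assertions separately: first build the representation \eqref{operatorJ} constructively, which yields existence, uniqueness and the degree bound at once, and then read off the isomorphism criterion from the top coefficient of $J(\xi^n)$.

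For existence and uniqueness I would determine the polynomials $a_n$ recursively from the action of $J$ on monomials. Isolating the top term $\nu=n$ in the expansion \eqref{Jx^n-short} gives
\[
J(\xi^n)(x)=a_n(x)+\sum_{\nu=0}^{n-1}\binom{n}{\nu}a_\nu(x)\,x^{n-\nu},
\]
so that, setting $a_0:=J(1)$ and
\[
a_n(x):=J(\xi^n)(x)-\sum_{\nu=0}^{n-1}\binom{n}{\nu}a_\nu(x)\,x^{n-\nu},\qquad n\geq 1,
\]
the sequence $\{a_n\}_{n\geq0}$ is forced; this simultaneously establishes uniqueness. An induction on $n$ then yields the degree bound: since $J$ does not increase the degree one has $\deg J(\xi^n)\leq n$, while each summand $a_\nu(x)x^{n-\nu}$ has degree at most $\nu+(n-\nu)=n$ under the induction hypothesis $\deg a_\nu\leq\nu$, whence $\deg a_n\leq n$. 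Finally, to confirm that the operator built from this sequence really is $J$, I would note that the computation underlying \eqref{Jx^n-short} shows $\sum_{\nu\geq 0}\frac{a_\nu(x)}{\nu!}D^\nu$ and $J$ agree on every monomial $\xi^n$, hence on all of $\mathcal{P}$ by linearity.

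For the isomorphism criterion the key observation is that the quantity in \eqref{lambdan} is exactly the coefficient of $x^n$ in $J(\xi^n)$. Taking $\tau=n$ in \eqref{Jx^n} expresses this leading coefficient as $\sum_{\nu=0}^{n}\binom{n}{n-\nu}a_{n-\nu}^{[n-\nu]}$, and the reindexing $\mu=n-\nu$ turns it into $\sum_{\mu=0}^{n}\binom{n}{\mu}a_\mu^{[\mu]}$, the left-hand side of \eqref{lambdan}. Since $\deg J(\xi^n)\leq n$ always holds, condition \eqref{lambdan} for a given $n$ is equivalent to $\deg J(\xi^n)=n$. Because \eqref{iso-conditions} characterises $J$ as an isomorphism precisely by $\deg J(\xi^n)=n$ for all $n$ (the case $n=0$ amounting to $J(1)\neq 0$), the conjunction of the conditions \eqref{lambdan} over all $n\geq 0$ is equivalent to $J$ being an isomorphism.

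I expect the only delicate point to be bookkeeping rather than substance: one must keep the degree induction and the index shift $\mu=n-\nu$ aligned, and correctly identify $a_n^{[n]}$ (which may vanish when $\deg a_n<n$) with the top coefficient of $J(\xi^n)$. The structural fact that a degree-nonincreasing linear map is invertible if and only if it sends $\xi^n$ to a polynomial of exact degree $n$ is already packaged in \eqref{iso-conditions}, so no separate triangularity argument is required.
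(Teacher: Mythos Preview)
The paper does not give its own proof of this lemma: it is quoted verbatim from \cite{PM-TAM-2019} and left without argument. Your proposal is a correct and self-contained proof. The recursive definition of $a_n$ is forced by isolating the $\nu=n$ term in \eqref{Jx^n-short}, the degree bound follows cleanly by induction, and your identification of $\sum_{\mu=0}^{n}\binom{n}{\mu}a_\mu^{[\mu]}$ with the coefficient of $x^n$ in $J(\xi^n)$ via \eqref{Jx^n} together with the characterisation \eqref{iso-conditions} settles the isomorphism criterion. There is nothing to compare against in this paper; your argument is the natural one and would be at home in the cited reference.
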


\medskip
In brief,  an operator that does not increase the degree can always be expressed in a $J$ format. Let us now recall some useful identities regarding any operator $J$, that are obtained by duality and taking into account \eqref{funcionalDu}.
\begin{align*}
\langle ^{t}J(u), f  \rangle &= \langle u, J(f) \rangle \;, \quad u \in \mathcal{P}^{\prime},\quad f \in \mathcal{P},\\
& = \sum_{n \geq 0} \langle u, \frac{a_{n}(x)}{n!}f^{(n)}(x) \rangle = \sum_{n \geq 0} \frac{(-1)^n}{n!} \langle D^{n}\left(a_{n}u\right), f \rangle ;
\end{align*} 
thence
\begin{equation}\label{transpose-J}
^{t}J(u) = \sum_{n \geq 0} \frac{(-1)^n}{n!}  D^{n}\left(a_{n}u\right), \quad u \in \mathcal{P}^{\prime}.
\end{equation}
Henceforth, we will denote the transposed operator $^{t}J(u)$ by $J(u)$, since most of the forthcoming calculations will be done on $\mathcal{P}'$. The results presented in \cite{PM-TAM-2019} about the $J$-image of the product of two polynomials $fg$ and of the form $fu$ require the definition of the following operator $J^{(m)}\, , \; m \geq 0$,  on $\mathcal{P}$ 
\begin{center} $J^{(m)}= \displaystyle \sum_{n \geq 0 } \frac{a_{n+m}(x)}{n!}D^{n}$ \end{center}
 whose transposed operator is given by
\begin{equation}\label{J^(m)}
J^{(m)}(u) = \sum_{n \geq 0 } \frac{(-1)^n}{n!}D^n\left( a_{n+m} u \right), \quad m \geq 0.
\end{equation}

\begin{lemma} \cite{PM-TAM-2019} For any $f,g \in \mathcal{P}$, $u \in \mathcal{P}^{\prime}$, we have:
\begin{align}
\label{J(fg)}&J\left( fg \right)(x)= \sum_{n \geq 0 } J^{(n)}\left(f\right)(x)  \frac{g^{(n)}(x)}{n!} = \sum_{n \geq 0 } J^{(n)}\left(g\right)(x)  \frac{f^{(n)}(x)}{n!},\\
\label{J(fu)}&J\left( fu  \right) = \sum_{n \geq 0 } \frac{(-1)^n}{n!}f^{(n)}(x) J^{(n)}(u).
\end{align}
\end{lemma}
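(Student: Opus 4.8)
The plan is to prove both identities by a single mechanism: expand the relevant power of $D$ by a Leibniz rule, substitute into the defining series of $J$ (or its transpose), and then re-index the resulting double sum so that the inner sum is recognised as $J^{(n)}$ applied to the appropriate argument. Identity \eqref{J(fg)} lives on $\mathcal{P}$ and uses the ordinary Leibniz rule, while \eqref{J(fu)} lives on $\mathcal{P}'$ and uses the product rule for forms \eqref{derivada do produto}; apart from that, the two computations are mirror images. Throughout, the series are only formally infinite: applied to a fixed polynomial, or paired with a fixed test polynomial, only finitely many terms survive (recall $\deg a_\nu\le\nu$), so all rearrangements are legitimate.

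For \eqref{J(fg)} I would start from $J(fg)=\sum_{\nu\ge 0}\frac{a_\nu}{\nu!}D^\nu(fg)$ and insert the Leibniz expansion $D^\nu(fg)=\sum_{k=0}^\nu\binom{\nu}{k}f^{(k)}g^{(\nu-k)}$. Cancelling the $\nu!$ gives the symmetric double sum $\sum_{\nu\ge0}\sum_{k=0}^\nu \frac{a_\nu}{k!\,(\nu-k)!}f^{(k)}g^{(\nu-k)}$. Setting $n=\nu-k$ (so $\nu=k+n$ and $a_\nu=a_{k+n}$) decouples the indices into $\sum_{k\ge0}\sum_{n\ge0}\frac{a_{k+n}}{k!\,n!}f^{(k)}g^{(n)}$. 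Pulling the factor $g^{(n)}/n!$ out front leaves $\sum_{k\ge0}\frac{a_{k+n}}{k!}f^{(k)}=J^{(n)}(f)$, which is the first equality in \eqref{J(fg)}. Since the decoupled sum is manifestly symmetric under the exchange $(f,k)\leftrightarrow(g,n)$, grouping instead by $k$ yields the second equality, so no separate argument is required.

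For \eqref{J(fu)} I would work with the transpose $J(fu)=\sum_{m\ge0}\frac{(-1)^m}{m!}D^m(a_m f u)$ coming from \eqref{transpose-J}. The key preliminary is the dual analogue of Leibniz: iterating \eqref{derivada do produto} gives, for a polynomial $h$ and a form $v$, the identity $D^m(hv)=\sum_{i=0}^m\binom{m}{i}h^{(i)}D^{m-i}(v)$, which I would justify by a short induction on $m$. Using commutativity and associativity of left-multiplication to write $a_m f u=f\,(a_m u)$ and applying this rule with $h=f$, $v=a_m u$, one gets $D^m(a_m f u)=\sum_{i=0}^m\binom{m}{i}f^{(i)}D^{m-i}(a_m u)$. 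Substituting, cancelling $m!$, and setting $n=m-i$ (so $a_m=a_{n+i}$) produces $\sum_{i\ge0}\frac{(-1)^i}{i!}f^{(i)}\sum_{n\ge0}\frac{(-1)^n}{n!}D^n(a_{n+i}u)$, whose inner sum is exactly $J^{(i)}(u)$ by \eqref{J^(m)}; this is \eqref{J(fu)} after renaming $i$ to $n$.

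The computations are routine once the bookkeeping is fixed, so the only genuine care points are the clean derivation of the form-Leibniz rule $D^m(hv)=\sum_{i}\binom{m}{i}h^{(i)}D^{m-i}(v)$ from \eqref{derivada do produto}, and the justification of the summation interchange together with the substitution $\nu=k+n$ (resp. $m=n+i$). For the latter I would stress that, because the operators act on a fixed polynomial or pair against a fixed test polynomial, only finitely many summands are nonzero, so Fubini for finite sums applies and no convergence question arises. I expect the main obstacle to be nothing deeper than keeping the index shift consistent between the binomial factors $\binom{\nu}{k}$, $\binom{m}{i}$ and the shifted subscripts $a_{k+n}$, $a_{n+i}$; getting that alignment right is precisely what makes the inner sum collapse to $J^{(n)}$.
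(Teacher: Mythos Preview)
Your argument is correct: both identities reduce to an application of the Leibniz rule (on $\mathcal{P}$ for \eqref{J(fg)}, and its dual version on $\mathcal{P}'$, obtained by iterating \eqref{derivada do produto}, for \eqref{J(fu)}), followed by the index change $\nu=k+n$ (resp.\ $m=i+n$) that makes the inner sum collapse to $J^{(n)}$ via its definition. The finiteness remark you make is the right way to justify the interchange of summations.

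Note, however, that the paper does not supply its own proof of this lemma: it is quoted verbatim from \cite{PM-TAM-2019} and stated without argument. So there is no in-paper proof to compare yours against; your computation is precisely the standard one that the cited reference is based on, and nothing further is needed.
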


\subsection{Looking at $J$ as a lowering operator of order $k$}

The generic operator $J$ defined in \eqref{operatorJ} does not increase the degree of polynomials and can be categorised in terms of its behaviour when compared with the standard derivative of order $k$, for $k=0,1,2,\ldots$, as we show next.

\medskip

Let us suppose that $J$ is an operator expressed as in \eqref{operatorJ}, and acting as the derivative of order $k$, for some non-negative integer $k$, that is, it fulfils the following conditions.
\begin{align}
\label{deg-k1} & J\left( \xi^{k}\right)(x) = a_{0}^{[k]} \neq 0 \;\; \textrm{and} \; \;\deg\left(J\left( \xi^{n+k}\right)(x)  \right) = n,\quad n \geq 0;  \\
\label{deg-k2}& J\left( \xi^{i}\right)(x) =0,\quad 0 \leq i \leq k-1,\; \textrm{if} \; k\geq 1.
\end{align}

\begin{lemma}\label{J-descending}
An operator $J$ fulfils \eqref{deg-k1}-\eqref{deg-k2} if and only if the next set of conditions hold.
\begin{description}
\item[a)] $a_{0}(x)=\cdots=a_{k-1}(x)=0$, if $k \geq 1$;
\item[b)] $\deg\left( a_{\nu}(x) \right) \leq \nu-k$, $\nu \geq k$;
\item[c)] \begin{equation}\label{neqcondition}
\lambda_{n+k}^{[k]} := \sum_{\nu = 0}^{n} \binom{n+k}{n+k-\nu} a_{n-\nu}^{[n+k-\nu]} \neq 0, \; n \geq 0.
\end{equation}
\end{description}
\end{lemma}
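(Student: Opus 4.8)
The plan is to read everything off the explicit expansion \eqref{Jx^n-short}, namely $J(\xi^{m})(x)=\sum_{\nu=0}^{m}\binom{m}{\nu}x^{m-\nu}a_\nu(x)$, together with its coefficient-wise form \eqref{Jx^n}. The single observation that drives both implications is that, under the generic constraint $\deg a_\nu\le\nu$, each summand $\binom{m}{\nu}x^{m-\nu}a_\nu(x)$ has degree at most $m$, and that the coefficient of $x^{n}$ in $J(\xi^{n+k})(x)$ coincides exactly with the quantity $\lambda_{n+k}^{[k]}$ appearing in \eqref{neqcondition}. I would record these two facts first, since they turn the degree statements \eqref{deg-k1}--\eqref{deg-k2} into transparent bookkeeping on the polynomials $a_\nu$.

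For sufficiency (items a)--c) $\Rightarrow$ \eqref{deg-k1}--\eqref{deg-k2}) I would argue directly. Assuming a), every term with $\nu\le k-1$ in \eqref{Jx^n-short} drops out, so for $0\le i\le k-1$ all summands vanish and $J(\xi^{i})(x)=0$, giving \eqref{deg-k2}. For $m=k$ only the term $\nu=k$ survives, whence $J(\xi^{k})(x)=a_k(x)$, which by b) is a constant and by c) with $n=0$ equals $a_0^{[k]}\ne0$. For $m=n+k$, items a) and b) bound each surviving summand by $\deg\big(x^{n+k-\nu}a_\nu\big)\le (n+k-\nu)+(\nu-k)=n$, so $\deg J(\xi^{n+k})\le n$, while c) says the coefficient of $x^{n}$ is $\lambda_{n+k}^{[k]}\ne0$; hence the degree is exactly $n$, establishing \eqref{deg-k1}.

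For necessity (\eqref{deg-k1}--\eqref{deg-k2} $\Rightarrow$ a)--c)) I would induct. Since $J(1)(x)=a_0(x)$ and, once $a_0=\dots=a_{i-1}=0$, one has $J(\xi^{i})(x)=a_i(x)$ for $0\le i\le k-1$, condition \eqref{deg-k2} forces $a_0=\dots=a_{k-1}=0$, i.e. a). For b) I would prove $\deg a_\nu\le\nu-k$ by induction on $\nu\ge k$: using a), $J(\xi^{\nu})(x)=a_\nu(x)+\sum_{k\le\mu<\nu}\binom{\nu}{\mu}x^{\nu-\mu}a_\mu(x)$, and the inductive hypothesis makes every term of the sum have degree $\le \nu-k$; since these cannot cancel the top of $a_\nu$ when $\deg a_\nu>\nu-k$, the identity $\deg J(\xi^{\nu})=\nu-k$ from \eqref{deg-k1} forces $\deg a_\nu\le\nu-k$. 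Finally c) is immediate: by \eqref{Jx^n} the coefficient of $x^{n}$ in $J(\xi^{n+k})$ is $\lambda_{n+k}^{[k]}$, and $\deg J(\xi^{n+k})=n$ makes this leading coefficient nonzero.

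The routine part is the sufficiency; the main obstacle is the non-cancellation argument in the induction for b), where one must be sure that the contributions of the lower-indexed $a_\mu$ to $J(\xi^{\nu})$ never reach degree $\deg a_\nu$ when the latter exceeds $\nu-k$, so that the prescribed degree $\nu-k$ can only be attained if $a_\nu$ itself already has degree $\le\nu-k$. For $k=0$ items a)--b) are vacuous and c) reduces to \eqref{lambdan}, so the statement specialises consistently to the isomorphism criterion of Lemma \ref{lemmaPascal}.
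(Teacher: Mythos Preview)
Your argument is correct in both directions. The sufficiency is indeed routine, and the inductive non-cancellation argument for item b) in the necessity direction is sound: once $\deg a_{\mu}\le\mu-k$ is known for $k\le\mu<\nu$, every term $\binom{\nu}{\mu}x^{\nu-\mu}a_{\mu}(x)$ in $J(\xi^{\nu})(x)$ has degree at most $\nu-k$, so a hypothetical leading term of $a_{\nu}$ of degree exceeding $\nu-k$ could not be cancelled and would contradict $\deg J(\xi^{\nu})=\nu-k$. The identification of the coefficient of $x^{n}$ in $J(\xi^{n+k})(x)$ with $\lambda_{n+k}^{[k]}$ via \eqref{Jx^n} is also accurate, which settles c).

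As for comparison: the paper states Lemma~\ref{J-descending} without proof, presumably regarding it as a direct extension of Lemma~\ref{lemmaPascal} (the case $k=0$) from \cite{PM-TAM-2019}. Your write-up supplies precisely the kind of elementary verification one would expect, and your closing remark that for $k=0$ items a)--b) become vacuous and c) reduces to \eqref{lambdan} confirms consistency with the isomorphism criterion already quoted. There is nothing to add or correct.
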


\begin{remark}
Note that in \eqref{neqcondition} we find $\lambda_{k}^{[k]}=a_{0}^{[k]} $.
\newline If $k=0$, then it is assumed that $\lambda_{n}^{[0]} \neq 0,\; n \geq 0$, matching \eqref{lambdan}, so that $J$ is an isomorphism. 
\newline If $k=1$, then $J$ imitates the usual derivative and is commonly called a lowering operator (e.g. \cite{Srivastava-Ben-Cheikh, PM-TAM-2016}).
\end{remark}
Given a MPS $\{ P_{n} \}_{n \geq 0}$ and a non-negative integer $k$, let us define its (normalised) $J$-image sequence of polynomials as follows, and notate its dual sequence by $\{ \tilde{u}_{n}\}_{n \geq 0}$. Of course, given that $J$ satisfies \eqref{deg-k1}-\eqref{deg-k2}, the obtained polynomial sequence $\{ \tilde{P}_{n} \}_{n \geq 0}$ is also a MPS.  
\begin{equation}\label{JPn}
\tilde{P}_{n}(x)= \left( \lambda_{n+k}^{[k]} \right)^{-1}J\left( P_{n+k} (x) \right), \quad n \geq 0.
\end{equation}
Moreover, the dual sequences of $\{ P_{n} \}_{n \geq 0}$ and $\{ \tilde{P}_{n} \}_{n \geq 0}$ are related as the next Lemma points out.
\begin{lemma} \cite{PM-TAM-2019} \label{J(tilde un)} Let us consider a MPS $\{ P_{n} \}_{n \geq 0}$ and an operator $J$ of the form \eqref{operatorJ} fulfilling \eqref{deg-k1}-\eqref{deg-k2}. Thus,
\par \begin{center} $\quad J\left( \tilde{u}_{n} \right) = \lambda_{n+k}^{[k]} u_{n+k}$. \end{center}
\end{lemma}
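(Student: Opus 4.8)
The plan is to identify the form $J(\tilde{u}_{n})$ by computing its action on the basis $\{P_m\}_{m \geq 0}$ of $\mathcal{P}$ and matching it against the known action of $u_{n+k}$. Recall that, as stressed right before \eqref{transpose-J}, the symbol $J(\tilde{u}_{n})$ denotes the transposed operator, so by the duality identity established there,
$$\langle J(\tilde{u}_{n}), P_m \rangle = \langle \tilde{u}_{n}, J(P_m) \rangle, \quad m \geq 0,$$
where on the right-hand side $J$ acts on $\mathcal{P}$ as in \eqref{operatorJ}. The infinite sum defining the transpose collapses to finitely many terms when tested against the fixed polynomial $P_m$, since $P_m^{(\nu)} = 0$ for $\nu > m$, so no convergence question arises and the identity is purely formal.

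Next I would evaluate $J(P_m)$ by distinguishing $m < k$ from $m \geq k$. When $0 \leq m \leq k-1$, the polynomial $P_m$ is a linear combination of monomials of degree at most $k-1$, and since $J(\xi^{i})(x)=0$ for $0 \leq i \leq k-1$ by \eqref{deg-k2}, linearity gives $J(P_m)=0$; hence $\langle J(\tilde{u}_{n}), P_m \rangle = 0$ for these indices. When $m \geq k$, definition \eqref{JPn} read with the shift of index $n+k \mapsto m$ yields $J(P_m) = \lambda_{m}^{[k]} \tilde{P}_{m-k}$, so that
$$\langle J(\tilde{u}_{n}), P_m \rangle = \lambda_{m}^{[k]} \langle \tilde{u}_{n}, \tilde{P}_{m-k} \rangle = \lambda_{m}^{[k]} \delta_{n,\,m-k}, \quad m \geq k,$$
by the defining property \eqref{SucDual} of the dual sequence $\{\tilde{u}_{n}\}$. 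This last Kronecker symbol is nonzero precisely when $m=n+k$, contributing the value $\lambda_{n+k}^{[k]}$.

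Collecting both cases, the form $J(\tilde{u}_{n})$ satisfies $\langle J(\tilde{u}_{n}), P_m \rangle = \lambda_{n+k}^{[k]}\, \delta_{m,\,n+k}$ for every $m \geq 0$, which is exactly the action of $\lambda_{n+k}^{[k]} u_{n+k}$ in view of $\langle u_{n+k}, P_m \rangle = \delta_{n+k,\,m}$. Since a form in $\mathcal{P}^{\prime}$ is uniquely determined by its values on the basis $\{P_m\}_{m \geq 0}$ of $\mathcal{P}$, I would conclude $J(\tilde{u}_{n}) = \lambda_{n+k}^{[k]} u_{n+k}$, as claimed. The argument is essentially a direct transcription of the duality relation; the only point demanding care is the bookkeeping of the degree-lowering behaviour of $J$, namely its vanishing on degrees below $k$ together with the index shift by $k$ in \eqref{JPn}, and this modest bookkeeping is the main obstacle, everything else following from the definitions.
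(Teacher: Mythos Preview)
Your proof is correct and is precisely the natural duality argument one would expect. Note, however, that the paper does not supply its own proof of this lemma: it is quoted from \cite{PM-TAM-2019} without demonstration, so there is no in-paper argument to compare against; your approach is the standard one and almost certainly coincides with what appears in the cited reference.
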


\begin{lemma} \cite{PM-TAM-2019} \label{tildeP=P} Let us consider a MPS $\{ P_{n} \}_{n \geq 0}$ and an operator $J$ of the form \eqref{operatorJ} such that \eqref{deg-k1}-\eqref{deg-k2} hold. Thus,
\begin{center} $\tilde{P}_{n}(x) = P_{n}(x),\;\; n \geq 0,\quad$ if and only if $\quad J\left( u_{n} \right) = \lambda_{n+k}^{[k]} u_{n+k}$. \end{center}
\end{lemma}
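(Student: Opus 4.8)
The plan is to prove the two implications separately, leaning throughout on the unconditional fact recorded in Lemma \ref{J(tilde un)}: whatever $\{P_n\}_{n\geq 0}$ is, the dual sequence $\{\tilde u_n\}_{n\geq 0}$ of its $J$-image MPS always satisfies $J(\tilde u_n)=\lambda_{n+k}^{[k]}u_{n+k}$. The second ingredient I would isolate first is the elementary remark that the correspondence between a MPS and its dual sequence is a bijection: if two monic sequences $\{P_m\}$ and $\{Q_m\}$ share the same dual sequence, then $\langle u_j,P_m-Q_m\rangle=0$ for every $j$, and since $P_m-Q_m$ has degree strictly less than $m$, expanding it in the $P_j$-basis forces all its coefficients to vanish, so $P_m=Q_m$.

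For the forward implication I would assume $\tilde P_n=P_n$ for all $n\geq 0$. Then $\{P_n\}$ and $\{\tilde P_n\}$ are literally the same MPS, so by the bijection above their dual sequences coincide, $\tilde u_n=u_n$. Substituting this into Lemma \ref{J(tilde un)} gives at once $J(u_n)=J(\tilde u_n)=\lambda_{n+k}^{[k]}u_{n+k}$, which is the asserted identity.

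For the converse I would assume $J(u_n)=\lambda_{n+k}^{[k]}u_{n+k}$ for all $n\geq 0$ and argue directly, computing the coefficients of $\tilde P_n$ in the $P_m$-expansion via the functionals $u_m$. Using the definition \eqref{JPn} together with the transposition rule $\langle u_m,J(f)\rangle=\langle J(u_m),f\rangle$, I obtain
$$\langle u_m,\tilde P_n\rangle=\left(\lambda_{n+k}^{[k]}\right)^{-1}\langle J(u_m),P_{n+k}\rangle=\left(\lambda_{n+k}^{[k]}\right)^{-1}\lambda_{m+k}^{[k]}\langle u_{m+k},P_{n+k}\rangle=\delta_{m,n},$$
where the hypothesis is used in the middle step and \eqref{SucDual} in the last. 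Since every polynomial $q$ admits the expansion $q=\sum_{m}\langle u_m,q\rangle P_m$, this yields $\tilde P_n=P_n$, completing the equivalence.

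The step I expect to demand the most care is the converse, specifically resisting the tempting but unjustified cancellation of $J$ in $J(u_n)=\lambda_{n+k}^{[k]}u_{n+k}=J(\tilde u_n)$ to deduce $u_n=\tilde u_n$ directly. That cancellation is valid only because the transposed operator is injective on $\mathcal{P}'$, which itself follows from $J$ being onto $\mathcal{P}$ (the images $J(P_{n+k})$ have strictly increasing degrees and hence span $\mathcal{P}$); I prefer the self-contained moment computation above, which avoids invoking this surjectivity/injectivity passage and makes explicit precisely where the nonvanishing of the constants $\lambda_{m+k}^{[k]}$ and the biorthonormality \eqref{SucDual} are needed.
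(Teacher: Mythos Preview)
Your argument is correct. Both implications are handled cleanly: the forward direction is immediate from Lemma~\ref{J(tilde un)} once one identifies the two dual sequences, and for the converse your direct computation of $\langle u_m,\tilde P_n\rangle$ via transposition is valid and makes transparent exactly where the hypothesis and the nonvanishing of the $\lambda_{m+k}^{[k]}$ enter.

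Note, however, that the paper itself does not supply a proof of this lemma: it is quoted from \cite{PM-TAM-2019} and stated without argument, so there is no in-paper proof to compare your approach against. Your write-up is a perfectly reasonable self-contained justification; the alternative you mention in your last paragraph (cancelling $J$ in $J(u_n)=J(\tilde u_n)$ via injectivity of the transposed operator, which follows from surjectivity of $J$ on $\mathcal{P}$) is also legitimate and is closer in spirit to how such results are typically argued in the cited literature, but your moment computation has the virtue of avoiding that extra functional-analytic step.
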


In brief, given a non-negative integer $k$, we can address the general problem of finding the MPSs $\textbf{P} = \left( P_{0}, P_{1}, \ldots \right)^{t}$ such that 
\begin{equation}
 \label{general-matrix-identity}\textbf{P} = \Lambda^{[k]}_{J} \; J \left( \textbf{P} \right),
 \end{equation}
where the matrix (with infinite dimensions)  $\Lambda^{[k]}_{J}$ contains de normalisation constants $ \left(\lambda_{n+k}^{[k]}\right)^{-1}\, , \, n \geq 0$, by working with the dual sequence
$\textbf{u}= \left( u_{0}, u_{1}, \ldots \right)^{t}$.

\section{An isomorphism applied to a $2$-orthogonal sequence }\label{sec:k=0 and J third order} 

Throughout this paper, we consider that $J$ is an isomorphism and $a_{\nu}(x)=0\,,\; \nu \geq 4$, thus
\begin{align}
\label{third-order-J}& J = a_{0}(x)I + a_{1}(x)D+\frac{a_{2}(x)}{2}D^{2}+\frac{a_{3}(x)}{3!}D^{3}\, ,\, \textrm{where}\\
\nonumber & a_{0}(x)=a_{0}^{[0]}\; , \; a_{1}(x)=a_{0}^{[1]}+a_{1}^{[1]}x \; , \; a_{2}(x)=a_{0}^{[2]}+a_{1}^{[2]}x +a_{2}^{[2]}x^2 \; , \\
\nonumber & a_{3}(x)=a_{0}^{[3]}+a_{1}^{[3]}x +a_{2}^{[3]}x^2   + a_{3}^{[3]}x^3   \;, \; 
\end{align}
and we suppose that the MPS $\{P_{n} \}_{n \geq 0}$ fulfils $$J\left( P_{n}(x) \right)= \lambda^{[0]}_{n}P_{n}(x), \; \textrm{with} \; \lambda^{[0]}_{n} \neq 0\, ,n \geq 0\, .$$

\medskip

Focusing on the $2$-orthogonality, where a polynomial sequence fulfil specific orthogonal conditions towards the two initial elements of the dual sequence \cite{Maroni-Toulouse}, 
we begin by recalling that a  $2$-orthogonal MPS  can be recursively computed by means of three sequences of constants 
$\{\beta_{n}\}_{n \geq 0}$, $\{\gamma^{1}_{n}\}_{n \geq 1}$ and $\{\gamma^{0}_{n}\}_{n \geq 1}$, with $\gamma^{0}_{n+1}  \neq 0,\; n \geq 0$ \cite{Douak-two-Laguerre, Maroni-NumAlg}. For the sake of simplicity, it  has been used (e.g. \cite{Douak-two-Laguerre}), the following notation for these structure coefficients of the $2$-orthogonal polynomial sequences:  $\{\beta_{n}\}_{n \geq 0}$, $\{\alpha_{n}\}_{n \geq 1}$ and $\{\gamma_{n}\}_{n \geq 1}$, respectively, as indicated below in \eqref{ic-2orto}-\eqref{rr-2orto}, with $\gamma_{n+1}  \neq 0,\; n \geq 0$. 
\begin{align}
\label{ic-2orto}  & P_{0}(x)=1,\:\: P_{1}(x)=x-\beta_{0},\;\:  P_{2}(x)=\left( x-\beta_{1} \right) P_{1}(x) - \alpha_{1}; \\
\label{rr-2orto}  & P_{n+3}(x) = \left( x-\beta_{n+2} \right) P_{n+2}(x) - \alpha_{n+2} P_{n+1}(x) - \gamma_{n+1} P_{n}(x).
\end{align}
In a similar manner to the orthogonal case, its dual sequence fulfils a recurrence relation based on those structure coefficients, as follows \cite{Maroni-NumAlg}.
\begin{equation} \label{functional-2orto}
x u_{n} = u_{n-1} + \beta_{n}u_{n} + \alpha_{n+1} u_{n+1} +  \gamma_{n+1} u_{n+2}, \;  \textrm{with} \; n \geq 0, \,\; u_{-1} = 0.
\end{equation}

Moreover, all elements of the dual sequence can be written in terms of the regular functional vector $(u_{0}, u_{1})$. In particular, we have \cite{Maroni-NumAlg} (p. 307)
\begin{align}
\label{Eq-u-even}u_{2n} &= E_{n}(x) u_{0} + A_{n-1}(x) u_{1}\\
\label{Eq-u-odd} u_{2n+1} &= B_{n}(x) u_{0} + F_{n}(x) u_{1}\; 
\end{align}
where $\deg\left( E_{n}(x)  \right)= \deg\left( F_{n}(x)  \right)=n$, $ \deg\left( A_{n}(x)  \right) \leq n $, $\deg\left( B_{n}(x)  \right) \leq n$, 
\newline $E_{0}(x)=1\, ,\, A_{-1}(x)=0$, $B_{0}(x)=0$, $F_{0}(x)=1$. These polynomial coefficients fulfil the following recurrence relations \cite{Maroni-NumAlg}. 
\begin{align*}
& E_{1}(x)= \frac{1}{\gamma_{1}} \left( x-\beta_{0}\right); \quad \quad A_{0}(x)= - \frac{\alpha_{1}}{\gamma_{1}}; \\
&  \alpha_{2n+2}E_{n+1}(x)+E_{n}(x)=\left( x-\beta_{2n+1}\right)B_{n}(x)-\gamma_{2n+2}B_{n+1}(x);\\
&  \gamma_{2n+3}E_{n+2}(x)- \left( x-\beta_{2n+2}\right) E_{n+1}(x)=- B_{n}(x)-  \alpha_{2n+3}B_{n+1}(x);\\
&  \\
& \gamma_{2n+2}F_{n+1}(x)-\left( x-\beta_{2n+1}\right) F_{n}(x)= - A_{n-1}(x)  - \alpha_{2n+2}A_{n}(x);\\
& \alpha_{2n+3}F_{n+1}(x) + F_{n}(x)= \left( x-\beta_{2n+2}\right) A_{n}(x)-  \gamma_{2n+3}A_{n+1}(x)\, , \quad n \geq 0.\\
\end{align*}

Recalling the definition \eqref{neqcondition}, we have
$$\lambda_{n}^{[0]} := \sum_{\nu = 0}^{n} \binom{n}{n-\nu} a_{n-\nu}^{[n-\nu]} \neq 0, \; n \geq 0\, ;$$
so that,
$\lambda_{0}^{[0]}=a_{0}(x)=a_{0}^{[0]}$ ; $ \lambda_{1}^{[0]}= a_{0}^{[0]}+a_{1}^{[1]}$ ;
$\lambda_{2}^{[0]}=a_{0}^{[0]}+2 a_{1}^{[1]} + a_{2}^{[2]}$,  just to mention a few.

In an equivalent manner, we learn from Lemma \ref{tildeP=P} that $J\left( u_{n} \right) = \lambda_{n}^{[0]} u_{n}$, which means
\begin{equation}\label{J(u_n)}
D\left(  -a_{1}(x)u_{n} + \frac{1}{2!}D\left( a_{2}(x)u_{n}\right)-\frac{1}{3!}D^{2}\left(a_{3}(x)u_{n}\right)  \right)= \left( \lambda_{n}^{[0]}-a_{0}(x) \right)u_{n}\, , n \geq 0\, .
\end{equation}
Taking into account the definition of $J$ established in \eqref{third-order-J}, identity \eqref{J(fu)}  allow us to expand the image of the form $fu$, with $f \in \mathcal{P}$ and $u \in \mathcal{P}'$, as follows:
\begin{equation}\label{J(fu)-specific}
J\left(fu \right)= f(x)J(u)-f^{\prime}(x)J^{(1)}(u)+ \frac{1}{2!} f^{\prime\prime}(x)J^{(2)}(u) -\frac{1}{3!} f^{(3)}(x)J^{(3)}(u).
\end{equation}
Furthermore, the forms $J^{(1)}(u)$, $J^{(2)}(u)$ and $J^{(3)}(u)$ have the following definitions that can be deduce from identity \eqref{J^(m)} , for an operator $J$ of third order \eqref{third-order-J}:
\begin{align}
\label{J^{(1)}} & J^{(1)}(u)=a_{1}(x)u-D\left(a_{2}(x) u \right)+\frac{1}{2!}D^{2}\left( a_{3}(x) u \right)  \\
&  J^{(2)}(u)=a_{2}(x)u-D\left(a_{3}(x) u \right)\\
& J^{(3)}(u)=a_{3}(x)u\\
& J^{(m)}(u)=0\, , \; m \geq 4.
\end{align}
When we apply $J$ on \eqref{Eq-u-even} and \eqref{Eq-u-odd} taking into account \eqref{J(fu)-specific}, we get for $n \geq 0$, respectively:
\begin{align}
\label{J(u2n)}  &\lambda_{2n}^{[0]} u_{2n}= E_{n}(x)J(u_{0}) -E'_{n}(x)J^{(1)}(u_{0}) +\frac{1}{2!}E^{(2)}_{n}(x)J^{(2)}(u_{0}) \\
\nonumber & \quad \quad -\frac{1}{3!}E^{(3)}_{n}(x)J^{(3)}(u_{0}) \\
 \nonumber & +A_{n-1}(x)J(u_{1}) -A'_{n-1}(x)J^{(1)}(u_{1}) +\frac{1}{2!}A^{(2)}_{n-1}(x)J^{(2)}(u_{1}) \\
\nonumber & \quad \quad -\frac{1}{3!}A^{(3)}_{n-1}(x)J^{(3)}(u_{1}) \, ;\\
%%%
\label{J(u2n+1)}   &\lambda_{2n+1}^{[0]} u_{2n+1}= B_{n}(x)J(u_{0}) -B'_{n}(x)J^{(1)}(u_{0}) +\frac{1}{2!}B^{(2)}_{n}(x)J^{(2)}(u_{0})\\
\nonumber & -\frac{1}{3!}B^{(3)}_{n}(x)J^{(3)}(u_{0}) \\
\nonumber & +F_{n}(x)J(u_{1}) -F'_{n}(x)J^{(1)}(u_{1}) +\frac{1}{2!}F^{(2)}_{n}(x)J^{(2)}(u_{1})\\
\nonumber & -\frac{1}{3!}F^{(3)}_{n}(x)J^{(3)}(u_{1}) \, .
%%%
\end{align}
Considering these two latest functional identities with $n=1$ and $n=2$ we put in evidence the following set of relations, while taking $n=0$ only yields trivial identities.
 \begin{align}
\label{7.1} & \lambda_{2}^{[0]} u_{2}= E_{1}(x)\lambda_{0}^{[0]} u_{0} - \frac{1}{\gamma_{1}}J^{(1)}(u_{0}) + A_{0}(x)\lambda_{1}^{[0]} u_{1}\, ; \\
\label{7.2} & \lambda_{4}^{[0]} u_{4}= E_{2}(x)\lambda_{0}^{[0]} u_{0} - E'_{2}(x)J^{(1)}(u_{0})+ \frac{1}{2}E^{(2)}_{2}(x)J^{(2)}(u_{0})\\
\nonumber & \quad \quad \quad \; + A_{1}(x) \lambda_{1}^{[0]} u_{1}- A'_{1}(x)J^{(1)}(u_{1})\, ;
\end{align}
 \begin{align}
\label{8.1} &\lambda_{3}^{[0]} u_{3}= B_{1}(x)\lambda_{0}^{[0]} u_{0} - B'_{1}(x)J^{(1)}(u_{0}) + F_{1}(x)\lambda_{1}^{[0]} u_{1} - F'_{1}(x)J^{(1)}(u_{1})\, ; \\
\label{8.2} &\lambda_{5}^{[0]} u_{5}= B_{2}(x)\lambda_{0}^{[0]} u_{0} - B'_{2}(x)J^{(1)}(u_{0})+ \frac{1}{2}B^{(2)}_{2}(x)J^{(2)}(u_{0})\\
\nonumber &\quad \quad \quad \; + F_{2}(x)\lambda_{1}^{[0]} u_{1} - F'_{2}(x)J^{(1)}(u_{1})+ \frac{1}{2}F^{(2)}_{2}(x)J^{(2)}(u_{1})\, .
\end{align}
In view of \eqref{Eq-u-even}-\eqref{Eq-u-odd}, we highlight the next functional identities
\begin{align}
\label{Eq-u2} u_{2} &= E_{1}(x) u_{0} + A_{0}(x) u_{1}\\
\label{Eq-u3} u_{3} &= B_{1}(x) u_{0} + F_{1}(x) u_{1}\\
\label{Eq-u4} u_{4} &= E_{2}(x) u_{0} + A_{1}(x) u_{1}\\
\label{Eq-u5} u_{5} &= B_{2}(x) u_{0} + F_{2}(x) u_{1}\
\end{align}
where
\begin{align*}
& E_{1}(x)= \frac{1}{\gamma_{1}} \left( x-\beta_{0}\right); \quad A_{0}(x)= - \frac{\alpha_{1}}{\gamma_{1}}; \quad 
B_{1}(x) = - \frac{\alpha_{2}}{\gamma_{1}\gamma_{2}} \left( x-\beta_{0}\right) - \frac{1}{\gamma_{2}} ; \\
& F_{1}(x) = \frac{1}{\gamma_{2}} \left( x-\beta_{1} + \frac{\alpha_{2}\alpha_{1}}{\gamma_{1}} \right); \quad E_{2}(x) = \frac{1}{\gamma_{3}} \big( \left( x-\beta_{2}\right) E_{1}(x) -\alpha_{3} B_{1}(x) \big); \\
& A_{1}(x) = -\frac{1}{\gamma_{3}} \left( \alpha_{3}F_{1}(x)+1+\frac{\alpha_{1}}{\gamma_{1}}\left( x-\beta_{2} \right)\right);\\
& B_{2}(x) = \frac{1}{\gamma_{4}}\Big((x-\beta_{3})B_{1}(x)-\alpha_{4}E_{2}(x)-E_{1}(x)\Big); \\
& F_{2}(x) =\frac{1}{\gamma_{4}}\Big((x-\beta_{3})F_{1}(x)-\alpha_{4}A_{1}(x)-A_{0}(x)\Big).
\end{align*}

From \eqref{7.1} and \eqref{8.1}, together with \eqref{Eq-u2}-\eqref{Eq-u3}, we get respectively
\begin{align}\label{9.1}
%%%%
&J^{(1)}(u_{0})=p_{0}(x)u_{0}+p_{1}(x)u_{1}\; \, , \textrm{ with}\\
\nonumber &  p_{0}(x)=\gamma_{1} E_{1}(x)\left( \lambda_{0}^{[0]}-\lambda_{2}^{[0]} \right)\, , \quad p_{1}(x)=\gamma_{1}A_{0}(x) \left( \lambda_{1}^{[0]}-\lambda_{2}^{[0]}  \right) \,; 
\end{align}
%%%%%%%%%%%%%%%%%
\begin{align}\label{9.2}
&J^{(1)}(u_{1})=f_{0}(x)u_{0}+f_{1}(x)u_{1}\;\, \textrm{with}\\
\nonumber & f_{0}(x)=\gamma_{2} \left( \lambda_{0}^{[0]}-\lambda_{3}^{[0]} \right)B_{1}(x) +\alpha_{2}E_{1}(x)\left( \lambda_{0}^{[0]}-\lambda_{2}^{[0]} \right)\, , \quad \\
\nonumber & f_{1}(x)=\gamma_{2}\left( \lambda_{1}^{[0]}-\lambda_{3}^{[0]} \right)F_{1}(x) + \alpha_{2}A_{0}(x)\left( \lambda_{1}^{[0]}-\lambda_{2}^{[0]}  \right) u_{1}\,. 
\end{align}

Let us  now focus on \eqref{7.2} and \eqref{Eq-u4}, having learned the above expansions \eqref{9.1} and \eqref{9.2}.
The polynomial $E_{2}(x)$ has degree two and $E^{(2)}_{2}(x)=\frac{2}{\gamma_1 \gamma_3} \neq 0$, thus we may isolate in \eqref{7.2}  the term $J^{(2)}(u_0)$, obtaining:
%%%%%%%%
\begin{align}\label{9.3}
&J^{(2)}(u_0) = \overline{p}_{0}(x)u_{0}+\overline{p}_{1}(x)u_{1}\; \, , \textrm{ with}\\
\nonumber &  \overline{p}_{0}(x)=\gamma_{1}\gamma_3\Big(   \left( \lambda_{4}^{[0]}-\lambda_{0}^{[0]}  \right)E_{2}(x)+ E'_{2}(x)p_{0}(x)+A'_{1}(x)f_{0}(x) \Big)\, , \\
%%%%%
\nonumber & \quad \overline{p}_{1}(x)=\gamma_{1}\gamma_3 \Big(   \left( \lambda_{4}^{[0]}-\lambda_{1}^{[0]}  \right)A_{1}(x)+ E'_{2}(x)p_{1}(x)+A'_{1}(x)f_{1}(x) \Big) \,. 
\end{align}
We remark that $\deg\left(\overline{p}_{0}(x)\right) \leq 2$ and $\deg\left(\overline{p}_{1}(x)\right) \leq 1$.
%%%%%%%%%%%%%%%%%%%%%%%%
\par Analogously, identity  \eqref{8.2} allows us to write $J^{(2)}(u_1)$ in terms of the pair $(u_0,u_1)$, with the help of the information settled up to this point, in particular  \eqref{Eq-u5}.
More precisely, the polynomial $F_{2}(x)$ has degree two and $F^{(2)}_{2}(x)=\frac{2}{\gamma_2 \gamma_4} \neq 0$, and we may affirm that
%%%%%%%%
\begin{align}\label{9.4}
&J^{(2)}(u_1) = \overline{f}_{0}(x)u_{0}+\overline{f}_{1}(x)u_{1}\; \, , \textrm{ with}\\
\nonumber &  \overline{f}_{0}(x)=\gamma_{2}\gamma_{4}\Big(   \left( \lambda_{5}^{[0]}-\lambda_{0}^{[0]}  \right)B_{2}(x)+ B'_{2}(x)p_{0}(x)+F'_{2}(x)f_{0}(x)-\frac{1}{2}B^{(2)}_{2}(x)\overline{p}_{0}(x) \Big)\, , \\
%%%%%
\nonumber & \overline{f}_{1}(x)=\gamma_{2}\gamma_{4}\Big(   \left( \lambda_{5}^{[0]}-\lambda_{1}^{[0]}  \right)F_{2}(x)+ B'_{2}(x)p_{1}(x)+F'_{2}(x)f_{1}(x)-\frac{1}{2}B^{(2)}_{2}(x)\overline{p}_{1}(x) \Big)\,. 
\end{align}
We remark that $\deg\left(\overline{f}_{0}(x)\right) \leq 2$ and $\deg\left(\overline{f}_{1}(x)\right) \leq 2$.

\smallskip

Next, we list a small set of functional identities that are fulfilled by the fundamental pair of functionals $(u_0,u_1)$ when the identity $J\left( P_{n} (x) \right) =  \lambda_{n}^{[0]} P_{n}(x)$ holds.
%%%%%%%%%%%%%%%%%%%%%%%%
\begin{lemma}\label{basic-lemma}
Considering an isomorphism $J$ defined by \eqref{third-order-J} and a $2$-orthogonal MPS $\{P_{n}(x)\}_{n \geq 0}$ such that $J\left( P_{n} (x) \right) =  \lambda_{n}^{[0]} P_{n}(x)\, , \, n \geq 0$ , the initial elements of the corresponding dual sequence $\{u_{n} \}_{n \geq 0}$ fulfil the following three identities:
\begin{align}
\label{Da2u0} & D\left( a_{2}(x)u_{0}\right)= \left( 2p_{0}(x)+ 4a_{1}(x) \right)  u_{0}+2p_{1}(x)u_{1}, \\
\label{Da2u1} & \frac{1}{2}D^2\left( a_{2}(x)u_{1}\right)-3a_{1}^{[1]}u_{1} = D \Big( f_{0}(x)u_{0}+ \left(2a_{1}(x) +f_{1}(x) \right)u_{1} \Big), \\
\label{Dcomplete} &D \Big( \overline{p}_{0}(x)u_{0}+\overline{p}_{1}(x)u_{1} \Big)+ \left(  2a_{1}(x)+4p_{0}(x) \right) u_{0} + 4p_{1}(x) u_{1} = 0 \,,
\end{align}
where polynomials $p_{0}(x)$ and $p_{1}(x)$ are defined in \eqref{9.1}, the polynomials $f_{0}(x)$ and $f_{1}(x)$ are defined in \eqref{9.2}, and the polynomials  $\overline{p}_{0}(x)$ and $\overline{p}_{1}(x)$ are defined in \eqref{9.3}.
\end{lemma}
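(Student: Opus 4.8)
The plan is to obtain all three identities from a single source, the eigenvalue relation \eqref{J(u_n)}, specialised to the indices $n=0$ and $n=1$, and then to substitute the explicit shapes of $J^{(1)}$ and $J^{(2)}$ recorded in \eqref{J^{(1)}} and the line following it, together with the expansions \eqref{9.1}, \eqref{9.2} and \eqref{9.3} already secured for the fundamental pair $(u_0,u_1)$. The decisive preliminary observation is that the two relevant eigenvalues collapse to very simple quantities: since $a_0(x)=a_0^{[0]}$ is constant, one has $\lambda_0^{[0]}-a_0(x)=0$ and $\lambda_1^{[0]}-a_0(x)=a_1^{[1]}$. Hence \eqref{J(u_n)} at $n=0$ reads $D\big(-a_1 u_0 + \tfrac12 D(a_2 u_0) - \tfrac16 D^2(a_3 u_0)\big)=0$, whereas at $n=1$ it reads $D\big(-a_1 u_1 + \tfrac12 D(a_2 u_1) - \tfrac16 D^2(a_3 u_1)\big)=a_1^{[1]}u_1$.

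For \eqref{Da2u0} I would first invoke the injectivity of $D$ on $\mathcal{P}'$, recalling that $Du=0$ forces $\langle u,x^m\rangle=0$ for every $m\ge 0$ and hence $u=0$, in order to strip the outer derivative from the homogeneous $n=0$ relation and obtain $-a_1 u_0 + \tfrac12 D(a_2 u_0) - \tfrac16 D^2(a_3 u_0)=0$. Comparing this with the definition $J^{(1)}(u_0)=a_1 u_0 - D(a_2 u_0)+\tfrac12 D^2(a_3 u_0)$ lets me eliminate $D^2(a_3 u_0)$ between the two and reach $D(a_2 u_0)=2J^{(1)}(u_0)+4a_1 u_0$; inserting the expansion \eqref{9.1} then gives exactly \eqref{Da2u0}.

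The identity \eqref{Da2u1} is, by contrast, already a statement at the once-differentiated level, so no appeal to injectivity is needed. I would expand its right-hand side as $D\big(f_0 u_0+(2a_1+f_1)u_1\big)=D\big(J^{(1)}(u_1)\big)+2D(a_1 u_1)$ using \eqref{9.2}, replace $D(J^{(1)}(u_1))$ by $D(a_1 u_1)-D^2(a_2 u_1)+\tfrac12 D^3(a_3 u_1)$ from the definition of $J^{(1)}$, and finally feed in the expanded $n=1$ relation $-D(a_1 u_1)+\tfrac12 D^2(a_2 u_1)-\tfrac16 D^3(a_3 u_1)=a_1^{[1]}u_1$ to annihilate the $D(a_1 u_1)$ and $D^3(a_3 u_1)$ terms; what remains is precisely $\tfrac12 D^2(a_2 u_1)-3a_1^{[1]}u_1$. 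For \eqref{Dcomplete} I would use $J^{(2)}(u_0)=a_2 u_0-D(a_3 u_0)$ together with \eqref{9.3} to write $D(\overline{p}_0 u_0+\overline{p}_1 u_1)=D(a_2 u_0)-D^2(a_3 u_0)$, and rewrite $(2a_1+4p_0)u_0+4p_1 u_1=2a_1 u_0+4J^{(1)}(u_0)$ via \eqref{9.1}; substituting the relation $D(a_2 u_0)=2J^{(1)}(u_0)+4a_1 u_0$ from \eqref{Da2u0} and the value $J^{(1)}(u_0)=\tfrac16 D^2(a_3 u_0)-a_1 u_0$ obtained en route, every term cancels and the left-hand side collapses to zero.

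I expect the main obstacle to be organisational rather than deep: one must keep scrupulous track of the factorial weights $\tfrac12=\tfrac{1}{2!}$ and $\tfrac16=\tfrac{1}{3!}$ throughout the eliminations, because the cancellations producing \eqref{Da2u0} and \eqref{Dcomplete} are exact and a single misplaced coefficient would destroy them. Conceptually, the one genuinely non-formal ingredient is the use of the injectivity of $D$ on the dual space in passing from the homogeneous $n=0$ relation to its primitive; this is what permits the third-order term $D^2(a_3 u_0)$ to be traded for $J^{(1)}(u_0)$, and it is precisely the step that separates the $n=0$ case (homogeneous, hence integrable) from the $n=1$ case (inhomogeneous, handled purely algebraically).
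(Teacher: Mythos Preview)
Your argument is correct and follows essentially the same route as the paper: specialise \eqref{J(u_n)} at $n=0$ and $n=1$, eliminate the $D^{2}(a_{3}u_{i})$ term against the definition of $J^{(1)}(u_{i})$, and for \eqref{Dcomplete} differentiate the $J^{(2)}(u_{0})$ relation and combine with \eqref{Da2u0}. The only cosmetic difference is that you make the injectivity of $D$ on $\mathcal{P}'$ explicit where the paper simply writes ``implies'', and in \eqref{Da2u1} you compute the right-hand side and match it to the left rather than substituting into the $n=1$ relation first; the eliminations performed are identical.
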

%%%%%%%%%%%%%%%%%%%%%%%%%%
\begin{proof}
Taking $n=0$ in \eqref{J(u_n)} , we get
$$D\left( -a_{1}(x)u_{0} + \frac{1}{2!}D\left( a_{2}(x)u_{0}\right)-\frac{1}{3!}D^{2}\left(a_{3}(x)u_{0}\right)     \right)=0$$
which implies $-a_{1}(x)u_{0} + \frac{1}{2!}D\left( a_{2}(x)u_{0}\right)-\frac{1}{3!}D^{2}\left(a_{3}(x)u_{0}\right) =0 \, ,$
that is, 
\begin{equation}\label{eqD2a3u0}
D^{2}\left(a_{3}(x)u_{0}\right) = -6a_{1}(x)u_{0} +3D\left( a_{2}(x)u_{0}\right)\,.
\end{equation}

On the other hand, identity \eqref{J^{(1)}} asserts
$$J^{(1)}(u_0) = a_{1}(x)u_{0}-D\left( a_{2}(x)u_{0} \right)  + \frac{1}{2!}D^{2}\left( a_{3}(x)u_{0}\right) $$
by which we get the following identity replacing $D^{2}\left(a_{3}(x)u_{0}\right)$ by the above expression \eqref{eqD2a3u0} and $J^{(1)}(u_{0})$ by $p_{0}(x)u_{0}+p_{1}(x)u_{1}$:
$$ D\left( a_{2}(x)u_{0}\right)= \left( 2p_{0}(x)+ 4a_{1}(x) \right)  u_{0}+2p_{1}(x)u_{1}.$$

The second identity is deduced in a similar way. We begin by considering $n=1$ in \eqref{J(u_n)}  with $D^2\left( a_{3}(x)u_{1} \right)$ replaced by $2J^{(1)}(u_{1})-2a_{1}(x)u_{1}+2D\left(a_{2}(x)u_{1} \right)$:
$$D\Big( -\frac{4}{6}a_{1}(x)u_{1}+\frac{1}{6}D\left(a_{2}(x)u_{1} \right)-\frac{1}{3}J^{(1)}(u_{1})   \Big)= \left(  \lambda_{1}^{[0]} -a_{0}(x) \right)u_{1}.$$
When we substitute $J^{(1)}(u_{1}) $ by the expression provided by \eqref{9.2} and calculate
$\lambda_{1}^{[0]} -a_{0}(x)=a_{1}^{[1]}$ we get \eqref{Da2u1}.
With respect to the third relation, taking into account \eqref{9.1} and \eqref{9.3}, and in view of 
\begin{align*}
J^{(1)}(u)=a_{1}(x)u-D\left(a_{2}(x) u \right)+\frac{1}{2!}D^{2}\left( a_{3}(x) u \right)  \\
J^{(2)}(u)=a_{2}(x)u-D\left(a_{3}(x) u \right)
\end{align*}
we may write
\begin{align*}
a_{1}(x)u_{0}-D\left(a_{2}(x) u_{0} \right)+\frac{1}{2!}D^{2}\left( a_{3}(x) u_{0} \right) = p_{0}(x)u_{0}+p_{1}(x)u_{1}\, ,  \\
a_{2}(x)u_{0}-D\left(a_{3}(x) u_{0} \right)= \overline{p}_{0}(x)u_{0}+\overline{p}_{1}(x)u_{1} \,.
\end{align*}
Let us apply the operator $D$ on the second identity in order to eliminate $D^{2}\left( a_{3}(x) u_{0} \right)$ in the first one, yielding
\newline $D\left(a_{2}(x) u_{0} \right)+ D\Big(\overline{p}_{0}(x)u_{0}+\overline{p}_{1}(x)u_{1} \Big)+ \left( 2p_{0}(x) -2a_{1}(x) \right) u_{0} + 2p_{1}(x) u_{1}  = 0\, .$ 
Inserting the information of \eqref{Da2u0}, we end the proof. \end{proof}

\section{First choice in the polynomial coefficients of operator $J$}\label{sec:a2=0} 

We will now proceed by taking two assumptions regarding polynomial coefficients $a_{1}(x)$ and $a_{2}(x)$ of operator $J$. These hypotheses were suggested by different calculations, performed during this research, that aimed to achieve functional identities corresponding to the Hahn classical set up, as described in p.182-183 of \cite{d-classical}.

In this section, we require  the use of Lemma 1.2, page 297 of  \cite{Maroni-Semi-classical} that has the following content.

\begin{lemma} \label{lemma1.2} \cite{Maroni-Semi-classical} 
Let $M$ and $N$ be two polynomials such that $Mu_{0}=Nu_{1}$.
If the vector functional $U=(u_{0}, u_{1})^{T}$ is regular, then necessarily $M=0$ and $N=0$.
\end{lemma}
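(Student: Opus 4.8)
The plan is to expand the two forms $Mu_{0}$ and $Nu_{1}$ in the dual basis $\{u_{n}\}_{n\ge 0}$ of the $2$-orthogonal MPS $\{P_{n}\}_{n\ge 0}$ associated with the regular vector $U=(u_{0},u_{1})^{T}$, and then to exploit a parity obstruction between the highest indices that survive on each side. Recall that any form $v$ decomposes as $v=\sum_{n\ge 0}\langle v,P_{n}\rangle\,u_{n}$, since $\langle u_{n},P_{m}\rangle=\delta_{n,m}$ makes $\{u_{n}\}$ linearly independent and dual to $\{P_{n}\}$. Applying this to $v=Mu_{0}$ and using the left-multiplication rule \eqref{poly-times-u} yields the coefficients $\langle u_{0},MP_{n}\rangle$, and likewise $Nu_{1}$ has coefficients $\langle u_{1},NP_{n}\rangle$.

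First I would pin down the largest surviving index on each side using the $2$-orthogonality conditions (case $d=2$) together with the regularity hypothesis. Writing $M=\sum_{i=0}^{s}m_{i}P_{i}$ with $m_{s}\neq 0$ (so $s=\deg M$), the coefficient $\langle u_{0},MP_{n}\rangle=\sum_{i=0}^{s}m_{i}\langle u_{0},P_{i}P_{n}\rangle$ vanishes for every $n\ge 2s+1$ by the relations $\langle u_{0},P_{i}P_{n}\rangle=0$ ($n\ge 2i+1$); at $n=2s$ all terms with $i<s$ already vanish, while the surviving term $m_{s}\langle u_{0},P_{s}P_{2s}\rangle$ is nonzero precisely by the regularity condition $\langle u_{0},P_{s}P_{2s}\rangle\neq 0$. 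Hence $Mu_{0}=\sum_{n=0}^{2s}c_{n}u_{n}$ with $c_{2s}\neq 0$. An identical computation for $N=\sum_{j=0}^{t}n_{j}P_{j}$, $n_{t}\neq 0$, using $\langle u_{1},P_{j}P_{n}\rangle=0$ for $n\ge 2j+2$ and $\langle u_{1},P_{t}P_{2t+1}\rangle\neq 0$, gives $Nu_{1}=\sum_{n=0}^{2t+1}d_{n}u_{n}$ with $d_{2t+1}\neq 0$.

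I would then conclude by comparing the two expansions. By linear independence of $\{u_{n}\}$, the hypothesis $Mu_{0}=Nu_{1}$ forces the coefficient sequences to coincide, so in particular the highest index carrying a nonzero coefficient must agree on both sides. But that index is $2s$ (even) whenever $M\neq 0$ and $2t+1$ (odd) whenever $N\neq 0$, and an even integer can never equal an odd one; thus $M$ and $N$ cannot both be nonzero. If exactly one of them were nonzero, say $M\neq 0$ and $N=0$, then $\sum_{n=0}^{2s}c_{n}u_{n}=0$ with $c_{2s}\neq 0$ would contradict the independence of the $u_{n}$ (and symmetrically for $N\neq 0$, $M=0$). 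The only surviving case is $M=N=0$.

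The delicate point, and the only place where regularity is genuinely invoked, is the second step: certifying that the critical coefficient is the true leading one. This requires both halves of the $2$-orthogonality definition — the vanishing relations to annihilate every lower-degree contribution at the critical value of $n$, and the non-vanishing (regularity) relations to guarantee that the remaining top term does not accidentally cancel. The asymmetry between the critical index $2s$ for $u_{0}$ and $2t+1$ for $u_{1}$ is exactly what produces the even/odd mismatch driving the argument, so I would record the two indices explicitly rather than treating the two functionals symmetrically.
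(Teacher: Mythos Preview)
Your argument is correct. The paper does not supply its own proof of this lemma; it is quoted verbatim from \cite{Maroni-Semi-classical} and used as a black box in Section~\ref{sec:a2=0}. So there is no in-paper proof to compare against, and your proposal stands on its own.

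Two minor remarks. First, you do not actually need the full dual-basis expansion $v=\sum_{n\ge 0}\langle v,P_{n}\rangle u_{n}$; it suffices to evaluate both sides of $Mu_{0}=Nu_{1}$ on the single polynomial $P_{n}$ with $n=\max(2s,2t+1)$ and observe that exactly one side is nonzero (the parity obstruction forces $2s\neq 2t+1$, so the maximum is attained strictly on one side). This bypasses any topological subtlety about infinite sums in $\mathcal{P}'$. Second, your computation of the top index is precisely an instance of Lemma~\ref{lema1} applied to $v=Mu_{0}$ (with $m-1=2s$) and $v=Nu_{1}$ (with $m-1=2t+1$); invoking that lemma directly would shorten the write-up and tie it to the paper's toolkit.
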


Let us analyse the identities \eqref{Da2u0} , \eqref{Da2u1}  and \eqref{Dcomplete} assuming that
\begin{align}\label{assumptions}
a_{2}(x) = 0 \; \textrm{and} \; a_{1}(x)=-\frac{1}{3}E_{1}(x)=-\frac{1}{3\gamma_1}(x-\beta_0)\,.
\end{align}

Under  \eqref{assumptions} , equation \eqref{Da2u0} yields $\left( 2p_{0}(x)+ 4a_{1}(x) \right)  u_{0}+2p_{1}(x)u_{1}=0$. Taking into account Lemma \ref{lemma1.2} we get
\begin{align}
&\label{p0} p_{0}(x)=-2a_{1}(x)\\
&\label{p1=0} p_{1}(x)=0  \; ,\, \textrm{thus}\, , \; \alpha_{1}=0 \, .
\end{align}

Looking at  \eqref{Dcomplete} with these new informations about $p_{0}(x)$ and $p_{1}(x)$ we get the following:
\begin{align}
\nonumber & D \Big( \overline{p}_{0}(x)u_{0}+\overline{p}_{1}(x)u_{1} \Big)-6a_{1}(x)u_{0}  = 0\\
\label{EqClassic-2} & \textrm{or} \;  D \Big( \overline{p}_{0}(x)u_{0}+\overline{p}_{1}(x)u_{1} \Big)+2E_{1}(x)u_{0} + 2A_{0}(x) u_{1} = 0 \\
\nonumber & \textrm{with} \; A_{0}(x)=0.
\end{align}

Finally, from \eqref{Da2u1} we obtain:
\begin{align} \label{EqClassic-1}
 D \Big( -\gamma_{1}f_{0}(x)u_{0} - \gamma_{1}\left(2a_{1}(x) +f_{1}(x) \right)u_{1} \Big) +u_{1} = 0\, .
\end{align}

In brief, the two identities \eqref{EqClassic-1} and \eqref{EqClassic-2} allow us to affirm that the regular vector functional $U=(u_{0}, u_{1})^{T}$ fulfil
$D\left(\Phi(x) U \right) + \Psi(x) U = 0$ with
\medskip

\begin{tabular}{cc}
$\quad\quad$ $\Psi(x)=\left[\begin{array}{cc}0 & 1 \\ 2E_{1}(x) & 2A_{0}(x) \end{array}\right], $
 & 
$\Phi(x)= \left[\begin{array}{cc} \phi_{1,1}(x) & \phi_{1,2}(x) \\  \phi_{2,1}(x) & \phi_{2,2}(x)\end{array}\right]$,
 \end{tabular}
 \medskip
\noindent as it is established for the $2$-orthogonal polynomial sequences fulfilling Hahn's property in  p.182-183 of \cite{d-classical} (see also p.320 of \cite{Maroni-Semi-classical}).

%%% 
In particular, we know that
$\deg \left( \phi_{1,1}(x)\right) \leq 1$, $\deg \left( \phi_{1,2}(x)\right) \leq 1$, $\deg \left( \phi_{2,1}(x)\right) \leq 2$ and $\deg \left( \phi_{2,2}(x)\right) \leq 1$  as required in \cite{d-classical}, as explained next with the detailed description of the functional relations and the polynomials involved.
\begin{align}
\nonumber & D\left( \phi_{1,1}(x) u_{0} + \phi_{1,2}(x) u_{1}\right) +u_{1} = 0,\\
\nonumber & D\left( \phi_{2,1}(x) u_{0} + \phi_{2,2}(x) u_{1}\right) +2E_{1}(x)u_{0}+2A_{0}(x)u_{1} = 0,\;\; \textrm{with}\; A_{0}(x)=0,
\end{align}
\begin{align}
\label{phi-1,1} &\quad \phi_{1,1}(x) =-\gamma_{1}f_{0}(x), \textrm{that is,}\;  \\
\nonumber & \phi_{1,1}(x)  = \alpha_2 \left( \frac{1}{3\gamma_{1}}-a_{3}^{[3]} \right) x + a_{3}^{[3]}\left( \alpha_{2}\beta_{0}-\gamma_{1}\right)- \frac{\alpha_{2}\beta_{0}}{3\gamma_{1}} + 1\\
%%%%%%%%%%%%%%%%%%%%%%%%%%%%%%%%%%
\label{phi-1,2} &\quad \phi_{1,2}(x) =- \gamma_{1}\left(2a_{1}(x) +f_{1}(x) \right), \textrm{that is,}\; \\
\nonumber & \phi_{1,2}(x) = a_{3}^{[3]} \gamma_{1} x + \frac{1}{3} \left( -2\beta_{0}+ \beta_{1}(2-3a_{3}^{[3]}\gamma_{1} )\right) 
\end{align}
\begin{align}
%%%%%%%%%%%%%%%%%%%%%%%%%%%%%%%%%%
\label{phi-2,1} &\quad \phi_{2,1}(x) = \overline{p}_{0}(x), \textrm{that is,}\; \\
\nonumber & \phi_{2,1}(x) =4 a_{3}^{[3]} x^2 \\
\nonumber & + \frac{1}{3\gamma_{1}\gamma_{2}} \Big(
\alpha_{2}\alpha_{3}\left( -1+9a_{3}^{[3]}\gamma_{1} \right)-2\left(\beta_{2}(-1+6a_{3}^{[3]} \gamma_{1} ) + \beta_{0}(1+6a_{3}^{[3]} \gamma_{1} ) \right)\gamma_{2} \Big) x\\
\nonumber &+  \frac{1}{3\gamma_{1}\gamma_{2}} \Big( \alpha_{3}\left( \alpha_{2}\beta_{0}-\gamma_{1} \right)\left(1-9a_{3}^{[3]}\gamma_{1}\right) +2\beta_{0}\left(\beta_{0}+\beta_{2}\left(-1+6a_{3}^{[3]}\gamma_{1}\right)   \right)\gamma_{2} \Big)\\
%%%%%%%%%%%%%%%%%%%%%%%%%%%%%%%%%
\label{phi-2,2} &\phi_{2,2}(x) =  \overline{p}_{1}(x), \textrm{that is,}\;\\
\nonumber &\phi_{2,2}(x) = \frac{1}{3\gamma_{2}} \alpha_{3}\left(1-9a_{3}^{[3]}\gamma_{1}\right)   x +  \frac{1}{3\gamma_{2}} \left( \alpha_{3}\beta_{1} \left(-1+9a_{3}^{[3]}\gamma_{1}\right)+ 3 \left(1-4a_{3}^{[3]}\gamma_{1}\right) \gamma_2 \right).
\end{align}

Moreover, the characterisation of the classical $d$-orthogonal polynomial sequences (in Hahn's sense) provided by \cite{d-classical}  imposes that the coefficient of $x^2$ in $\phi_{2,1}(x)$ is different from $\frac{2}{\gamma_{1}(m+1)}$, $m \geq 0$, (that is, different from $c \times \frac{1}{m+1}$, being $c$ the leading coefficient of $\psi(x)=2E_{1}(x)$) and the coefficient of $x$ in $\phi_{1,2}(x)$ is different from $\frac{1}{m+1}$, $m \geq 0$ (cf. p.183 of \cite{d-classical} or p.320 of \cite{Maroni-Semi-classical}). 
This conducts us to the addition of the following restriction about the leading coefficient of polynomial $a_{3}(x)$ in the definition of the operator $J$:
$$a_{3}^{[3]} \neq \frac{1}{\gamma_{1}(m+1)}\; , m\geq 0. $$
We now summarise the conclusions of the above argumentation in the next result.

\begin{theorem}\label{a2=0}
 Let  $\{P_{n}(x)\}_{n \geq 0}$ be a $2$-orthogonal MPS such that $J\left( P_{n} (x) \right)= \lambda_{n}^{[0]}P_{n}(x), \; n \geq 0$, with $J$ defined by \eqref{third-order-J}.
 If the polynomial coefficients of operator $J$ fulfil:
\begin{align*}
a_{2}(x) = 0 \; \wedge a_{1}(x)=-\frac{1}{3\gamma_{1}}\left( x-\beta_{0} \right) \; \wedge \, a_{3}^{[3]} \neq \frac{1}{\gamma_{1}(m+1)}\; , m\geq 0, 
\end{align*}
then $\{P_{n}(x)\}_{n \geq 0}$ is (Hahn) classical and the regular vector functional $U=(u_{0}, u_{1})^{T}$ fulfils
$$D\left(\Phi(x) U \right) + \Psi(x) U = 0,$$
%%%%%%%%%%%%%%%%

\begin{tabular}{cc}
$\quad\quad$ $\Psi(x)=\left[\begin{array}{cc}0 & 1 \\ 2E_{1}(x) & 0 \end{array}\right], $
 & 
$\Phi(x)= \left[\begin{array}{cc} \phi_{1,1}(x) & \phi_{1,2}(x) \\  \phi_{2,1}(x) & \phi_{2,2}(x)\end{array}\right]$,
 \end{tabular}

\noindent with entries $\phi_{i,j}(x)$ defined by \eqref{phi-1,1}-\eqref{phi-2,2}. 
%%%%%%%%%%%%%%%%%
\end{theorem}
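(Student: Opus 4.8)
The plan is to read the three functional identities of Lemma \ref{basic-lemma} under the two hypotheses $a_2(x)=0$ and $a_1(x)=-\tfrac{1}{3}E_1(x)$, and to show that they collapse exactly into the matrix Pearson-type equation that characterises Hahn-classical $2$-orthogonality in \cite{d-classical}.

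First I would substitute the two hypotheses into \eqref{Da2u0}. Because the left-hand side then vanishes, the identity reads $\bigl(2p_0(x)+4a_1(x)\bigr)u_0+2p_1(x)u_1=0$, and the regularity of $U=(u_0,u_1)^T$, through Lemma \ref{lemma1.2}, forces $p_0(x)=-2a_1(x)$ and $p_1(x)=0$. Tracing $p_1$ back through its definition \eqref{9.1} yields $\alpha_1=0$, hence $A_0(x)=-\alpha_1/\gamma_1=0$. This is the pivotal reduction: a single application of regularity simultaneously pins down $p_0$, annihilates the off-diagonal entry $A_0$, and prepares the remaining identities to take the required shape.

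Next I would feed these consequences into \eqref{Dcomplete} and \eqref{Da2u1}. With $p_0=-2a_1$, $p_1=0$ and $-6a_1=2E_1$, identity \eqref{Dcomplete} becomes $D\bigl(\overline p_0 u_0+\overline p_1 u_1\bigr)+2E_1(x)u_0=0$, which is \eqref{EqClassic-2}; and \eqref{Da2u1}, after using $a_2=0$, $-3a_1^{[1]}=\tfrac{1}{\gamma_1}$ and clearing $\gamma_1$, becomes $D\bigl(-\gamma_1 f_0 u_0-\gamma_1(2a_1+f_1)u_1\bigr)+u_1=0$, which is \eqref{EqClassic-1}. Reading these two scalar relations as the two rows of one vector identity gives $D(\Phi U)+\Psi U=0$ with $\Psi$ as displayed and entries $\phi_{1,1}=-\gamma_1 f_0$, $\phi_{1,2}=-\gamma_1(2a_1+f_1)$, $\phi_{2,1}=\overline p_0$, $\phi_{2,2}=\overline p_1$. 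Inserting the known formulas for $f_0,f_1$ from \eqref{9.2} and for $\overline p_0,\overline p_1$ from \eqref{9.3} then produces the explicit expressions \eqref{phi-1,1}--\eqref{phi-2,2}; this is bookkeeping, and in passing it confirms the degree bounds $\deg\phi_{1,1},\deg\phi_{1,2},\deg\phi_{2,2}\le 1$ and $\deg\phi_{2,1}\le 2$.

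Finally I would match this data against the characterisation of \cite{d-classical}. The remaining content is to verify the two non-degeneracy conditions that distinguish a genuinely classical sequence: the leading coefficient of $\phi_{2,1}$ must differ from $c/(m+1)$, with $c=2/\gamma_1$ the leading coefficient of $\psi(x)=2E_1(x)$, and the leading coefficient of $\phi_{1,2}$ must differ from $1/(m+1)$, for every $m\ge 0$. Reading off the top coefficients from \eqref{phi-2,1} and \eqref{phi-1,2}, both requirements collapse to the single hypothesis $a_3^{[3]}\neq\tfrac{1}{\gamma_1(m+1)}$, so the characterisation applies and $\{P_n\}_{n\ge0}$ is Hahn-classical. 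I expect the main obstacle to be not any individual deduction but the faithful reconciliation of the present identity $D(\Phi U)+\Psi U=0$ with the normalisation conventions of \cite{d-classical}: one must be sure that the degree bracket imposed on $\Phi$ coincides with their admissibility requirement and that $\psi=2E_1$ plays the role of their lowering polynomial, since it is exactly this alignment that converts the exclusion of the values $\tfrac{1}{\gamma_1(m+1)}$ into the guarantee that the differentiated sequence remains $2$-orthogonal rather than degenerating.
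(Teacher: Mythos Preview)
Your proposal is correct and follows essentially the same route as the paper: you specialise the three identities of Lemma~\ref{basic-lemma} under $a_2=0$ and $a_1=-\tfrac{1}{3}E_1$, use Lemma~\ref{lemma1.2} on \eqref{Da2u0} to force $p_0=-2a_1$, $p_1=0$ (hence $\alpha_1=0$, $A_0=0$), then read \eqref{Dcomplete} and \eqref{Da2u1} as the two rows of $D(\Phi U)+\Psi U=0$, and finally check that both non-degeneracy conditions of \cite{d-classical} reduce to $a_3^{[3]}\neq\tfrac{1}{\gamma_1(m+1)}$. Your closing caveat about normalisation conventions is prudent but not an actual gap; the paper simply asserts the match with \cite{d-classical} at the same point.
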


\section{Second choice in the polynomial coefficients of operator $J$}\label{sec:a3=varsigma a2} 

In this section, we consider the following  hypotheses regarding the polynomial coefficients of operator $J$, defined in \eqref{third-order-J}:
$$a_{3}(x)=\tau\, a_{2}(x)\, , \textrm{for a certain nonzero constant\;} \tau \, , \textrm{and}\;  \deg(a_{1}(x))=1.$$

From the definition of $J^{(2)}(u)$ and the content of \eqref{9.4} , we may write
$$ a_{2}(x)u_1-D\left(a_{3}(x)u_1\right) = \overline{f}_{0}(x)u_{0}+\overline{f}_{1}(x)u_{1}$$
which in view of $a_{3}(x)=\tau\, a_{2}(x)$ becomes
$$ \tau D\left(a_{2}(x)u_1\right) = -\overline{f}_{0}(x)u_{0}+\left( a_{2}(x)-\overline{f}_{1}(x)\right)u_{1}$$
and thus
\begin{equation}\label{D^2(a2)}
 D^2\left(a_{2}(x)u_1\right) = \frac{1}{\tau}D\left( -\overline{f}_{0}(x)u_{0}+\left( a_{2}(x)-\overline{f}_{1}(x)\right)u_{1}\right).
 \end{equation}
We can now eliminate the term on $D^2\left(a_{2}(x)u_1\right) $ of the general identity \eqref{Da2u1} of Lemma \ref{basic-lemma}; in other words, reading \eqref{Da2u1} with the information of \eqref{D^2(a2)} yields
\begin{align}
&\label{Eq-case2-1} D\Big( \varpi_{1,1}(x)u_0+ \varpi_{1,2}(x)u_1  \Big) +u_1 = 0 \;, \textrm{with}\, \\
&\label{varpi11} \varpi_{1,1}(x)= \frac{1}{3a_{1}^{[1]}} \left( \frac{1}{2\tau}\overline{f}_{0}(x) + f_{0}(x)    \right)\\
&\label{varpi12} \varpi_{1,2}(x)= \frac{1}{3a_{1}^{[1]}} \left( 2a_{1}(x)+f_{1}(x)-\frac{1}{2\tau}\left(a_{2}(x)- \overline{f}_{1}(x) \right)\right)\,.
\end{align}
Looking at the definition of the polynomials $\overline{f}_{0}(x)$ and $\overline{f}_{1}(x)$  involved in  \eqref{9.4}, we know that $\deg( \varpi_{1,1}(x) ) \leq 2 $ and  $\deg(\varpi_{1,2}(x)) \leq 2$.

The detailed computation of these latest polynomials (confirmed by the use of a computer algebra software) allows us to conclude that
\newline $\deg( \varpi_{1,1}(x) ) \leq 1 $ and  $\deg(\varpi_{1,2}(x)) \leq 1$ if and only if $a_{2}^{[2]}=0$ and $\alpha_{4}= \frac{\alpha_{2}\gamma_{3}}{\gamma_{2}}$, respectively.

Subsequently, we add to the set of hypotheses of this discussion the conditions:
\begin{center}$a_{2}^{[2]}=0$ and $\alpha_{4}= \dfrac{\alpha_{2}\gamma_{3}}{\gamma_{2}}.$\end{center}

Let us now look at \eqref{Dcomplete} of Lemma \ref{basic-lemma}:
$$D \Big( \overline{p}_{0}(x)u_{0}+\overline{p}_{1}(x)u_{1} \Big)+ \left(  2a_{1}(x)+4p_{0}(x) \right) u_{0} + 4p_{1}(x) u_{1} = 0 $$
taking into account the information considered up to this moment, namely, $a_{3}(x)= \tau a_{2}(x)$ (thus $a_{3}^{[3]}=0$) , $a_{1}^{[1]} \neq 0$, $a_{2}^{[2]}=0$  and $\alpha_{4}=\frac{\alpha_{2}\gamma_{3}}{\gamma_{2}}.$

Looking carefully at the term $ \left(  2a_{1}(x)+4p_{0}(x) \right) u_{0}$ we conclude that 
$$ 2a_{1}(x)+4p_{0}(x) = 2E_{1}(x) \Leftrightarrow a_{1}(x)= -\frac{1}{3\gamma_{1}}(x-\beta_{0}).$$

Hence, considering this choice of coefficient $a_{1}(x)$, identity \eqref{Dcomplete} has the form
$$D \Big( \overline{p}_{0}(x)u_{0}+\overline{p}_{1}(x)u_{1} \Big)+ 2E_{1}(x) u_{0} + \frac{4}{3}A_{0}(x)u_{1} = 0 $$
or $D \Big( \overline{p}_{0}(x)u_{0}+\overline{p}_{1}(x)u_{1} \Big)+ 2E_{1}(x) u_{0} +2A_{0}(x)u_{1}-\frac{2}{3}A_{0}(x)u_{1} = 0 $.

Replacing the final term $-\frac{2}{3}A_{0}(x)u_{1}$ by the information provided by \eqref{Eq-case2-1}, specifically  $\frac{2}{3}A_{0}(x)D\Big( \varpi_{1,1}(x)u_0+ \varpi_{1,2}(x)u_1  \Big) $, we get the following.
\begin{align}
&D\Big(  \varpi_{2,1}(x) u_{0} + \varpi_{2,2}(x) u_{1} \Big) +2E_{1}(x)u_{0}+2A_{0}(x)u_{1}=0\\
&\label{varpi21} \varpi_{2,1}(x)= \overline{p}_{0}(x)+\frac{2}{3}A_{0}(x)\varpi_{1,1}(x) \\
&\label{varpi22} \varpi_{2,2}(x)= \overline{p}_{1}(x)+\frac{2}{3}A_{0}(x)\varpi_{1,2}(x) 
\end{align}

In brief, we can assert the next result.

\begin{theorem}\label{a3=Ca2}
 Let  $\{P_{n}(x)\}_{n \geq 0}$ be a $2$-orthogonal MPS fulfilling $\alpha_{4}= \dfrac{\alpha_{2}\gamma_{3}}{\gamma_{2}}$ and $J\left( P_{n} (x) \right)= \lambda_{n}^{[0]}P_{n}(x), \; n \geq 0$,  with $J$ defined by \eqref{third-order-J}.
 If the polynomial coefficients of operator $J$ fulfil:
\begin{align*}
&a_{3}(x)=\tau\, a_{2}(x)\, , \textrm{for some\;} \tau \neq 0 \;\; , \;\; a_{1}(x)= -\frac{1}{3\gamma_{1}}(x-\beta_{0}) \; \; , \;\;\deg(a_{2}(x)) \leq 1\, , \\
& a^{[2]}_{1} \neq \dfrac{2\tau}{\gamma_{1}(m+1)}-\dfrac{2}{3\gamma_1}\left(\beta_{1}-\beta_{3}  \right) , m\geq 0, 
\end{align*}
then $\{P_{n}(x)\}_{n \geq 0}$ is (Hahn) classical and the regular vector functional $U=(u_{0}, u_{1})^{T}$ fulfils
$$D\left(\Phi(x) U \right) + \Psi(x) U = 0,$$
%%%%%%%%%%%%%%%%
\begin{tabular}{cc}
$\quad\quad$ $\Psi(x)=\left[\begin{array}{cc}0 & 1 \\ 2E_{1}(x) & 0 \end{array}\right], $
 & 
$\Phi(x)= \left[\begin{array}{cc}\varpi_{1,1}(x) & \varpi_{1,2}(x) \\  \varpi_{2,1}(x) & \varpi_{2,2}(x)\end{array}\right]$,
 \end{tabular}
\noindent with entries $\varpi_{i,j}(x)$ defined by \eqref{varpi11}, \eqref{varpi12}, \eqref{varpi21} and \eqref{varpi22}, that under the conditions described have the expressions indicated in Table \ref{tab:table-1}.
\end{theorem}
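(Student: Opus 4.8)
The plan is to collect the three functional identities of Lemma~\ref{basic-lemma}, specialise them under the two standing hypotheses of this section, $a_{3}(x)=\tau\,a_{2}(x)$ with $\tau\neq0$ and $\deg(a_{1})=1$, and reorganise them into a single matrix Pearson-type system $D(\Phi(x)U)+\Psi(x)U=0$ for the regular pair $U=(u_{0},u_{1})^{T}$, after which the characterisation of Hahn-classical $2$-orthogonal sequences in \cite{d-classical} applies directly. First I would use $a_{3}=\tau a_{2}$ to convert the $J^{(2)}(u_{1})$-relation \eqref{9.4} into the expression \eqref{D^2(a2)} for $D^{2}(a_{2}u_{1})$; inserting this into the second identity \eqref{Da2u1} of Lemma~\ref{basic-lemma} kills the second-order derivative and produces the first Pearson row \eqref{Eq-case2-1}, namely $D(\varpi_{1,1}u_{0}+\varpi_{1,2}u_{1})+u_{1}=0$ with $\varpi_{1,1},\varpi_{1,2}$ given by \eqref{varpi11}-\eqref{varpi12}. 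This already fixes the first row of $\Psi$ as $(0,1)$.

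The second step is the degree bookkeeping that both justifies and is justified by the remaining hypotheses. A priori one only has $\deg\varpi_{1,1}\le2$ and $\deg\varpi_{1,2}\le2$, whereas the template of \cite{d-classical} demands the first-row entries have degree at most $1$; I would therefore extract the degree-two coefficients of $\varpi_{1,1}$ and $\varpi_{1,2}$ in closed form through \eqref{9.4}, \eqref{varpi11}-\eqref{varpi12}, and verify that they vanish precisely when $a_{2}^{[2]}=0$ (i.e.\ $\deg a_{2}\le1$) and $\alpha_{4}=\alpha_{2}\gamma_{3}/\gamma_{2}$. Adjoining these, I would turn to the third identity \eqref{Dcomplete}: using $p_{0}=\gamma_{1}E_{1}(\lambda_{0}^{[0]}-\lambda_{2}^{[0]})$ from \eqref{9.1} together with the hypothesis $a_{1}(x)=-\tfrac{1}{3\gamma_{1}}(x-\beta_{0})$, one checks that the $u_{0}$-coefficient $2a_{1}(x)+4p_{0}(x)$ collapses exactly to the prescribed $2E_{1}(x)$ (this is the consistency that forces the stated $a_{1}$). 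Substituting the already-derived relation \eqref{Eq-case2-1}, scaled by $\tfrac{2}{3}A_{0}(x)$, back into \eqref{Dcomplete} absorbs the residual term and delivers the second row $D(\varpi_{2,1}u_{0}+\varpi_{2,2}u_{1})+2E_{1}(x)u_{0}+2A_{0}(x)u_{1}=0$ with $\varpi_{2,1},\varpi_{2,2}$ as in \eqref{varpi21}-\eqref{varpi22}, matching the second row of the asserted system.

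Finally I would confirm that the assembled pair $(\Phi,\Psi)$ meets the exact specifications of \cite{d-classical}: $\deg\varpi_{1,1},\deg\varpi_{1,2},\deg\varpi_{2,2}\le1$ and $\deg\varpi_{2,1}\le2$, with $\Psi$ of the displayed shape and $\psi(x)=2E_{1}(x)$ of degree one, and then verify the non-degeneracy requirement that the coefficient of $x$ in $\varpi_{1,2}$ differ from $\tfrac{1}{m+1}$ for every $m\ge0$. I expect the only genuine difficulty to lie here: one must compute the linear coefficient of $\varpi_{1,2}$ explicitly via \eqref{varpi12}, which after imposing $a_{3}^{[3]}=0$, $a_{2}^{[2]}=0$ and $\alpha_{4}=\alpha_{2}\gamma_{3}/\gamma_{2}$ should reduce to a tractable rational expression in $a_{1}^{[2]},\tau,\gamma_{1},\beta_{1},\beta_{3}$, and whose exclusion from the set $\{\tfrac{1}{m+1}:m\ge0\}$ is exactly the stated restriction $a_{1}^{[2]}\neq\tfrac{2\tau}{\gamma_{1}(m+1)}-\tfrac{2}{3\gamma_{1}}(\beta_{1}-\beta_{3})$; the parallel constraint coming from the leading coefficient of $\varpi_{2,1}$ is then subsumed by this one, as in the first-choice case. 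With these verifications in place, the characterisation of \cite{d-classical} (cf.\ p.182-183) guarantees that $\{P_{n}\}_{n\ge0}$ is Hahn-classical, and the explicit entries recorded in Table~\ref{tab:table-1} follow by straightforward substitution of the simplified coefficients, ideally cross-checked with a computer algebra system.
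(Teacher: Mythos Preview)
Your proposal is correct and follows essentially the same route as the paper: derive \eqref{D^2(a2)} from $a_{3}=\tau a_{2}$ and \eqref{9.4}, feed it into \eqref{Da2u1} to obtain the first Pearson row \eqref{Eq-case2-1}; impose $a_{2}^{[2]}=0$ and $\alpha_{4}=\alpha_{2}\gamma_{3}/\gamma_{2}$ to force $\deg\varpi_{1,1},\deg\varpi_{1,2}\le1$; then use \eqref{Dcomplete} together with $a_{1}(x)=-\tfrac{1}{3\gamma_{1}}(x-\beta_{0})$ and the substitution of $\tfrac{2}{3}A_{0}(x)$ times \eqref{Eq-case2-1} to produce the second row; finally invoke \cite{d-classical}. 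One small correction: under the standing hypotheses the paper in fact finds $\deg\varpi_{2,1}\le1$ (see Table~\ref{tab:table-1}), so the $x^{2}$-coefficient condition from \cite{d-classical} is satisfied trivially (the coefficient is $0$), rather than being ``subsumed'' by the $\varpi_{1,2}$ constraint as you anticipate.
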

\begin{proof}
The arguments presented in this section demonstrate this result, except for the need of the condition  
$$a^{[2]}_{1} \neq \dfrac{2\tau}{\gamma_{1}(m+1)}-\dfrac{2}{3\gamma_1}\left(\beta_{1}-\beta_{3}  \right) , m\geq 0\, .$$

As mention in the previous section, the characterisation of the classical $d$-orthogonal polynomial sequences (in Hahn's sense) provided by \cite{d-classical}  imposes that the coefficient of $x^2$ in $\varpi_{2,1}(x)$ is different from $\frac{2}{\gamma_{1}(m+1)}$, $m \geq 0$, and the coefficient of $x$ in $\varpi_{1,2}(x)$ is different from $\frac{1}{m+1}$, $m \geq 0$ (cf. p.183 of \cite{d-classical}). The first requirement is naturally assured since $\deg\left(\varpi_{2,1}(x)\right) \leq 1$ and from the second we get the above inequality, since the coefficient of $x$ in $\varpi_{1,2}(x)$ is given by $\frac{2(\beta_1-\beta_3)+3\gamma_1a^{[2]}_1}{6\tau}$.
\end{proof}

\begin{landscape}
\begin{table}[H]
\begin{footnotesize}
%%%%%%%
\begin{align*}
&\varpi_{1,1}(x)= \frac{3 \beta _0 \gamma _2+\alpha _2 \beta _0 \left(\beta _1+\beta _2-2 \left(\beta _3+\tau \right)\right)+\gamma _1 \left(-2 \beta _0-3 \beta _1+2 \beta _3+6 \tau \right)}{6 \gamma _1 \tau}\\
&+\frac{x \left(3 \left(\gamma _1-\gamma _2\right)-\alpha _2 \left(\beta _1+\beta _2-2 \left(\beta _3+\tau\right)\right)\right)}{6 \gamma _1 \tau}\\
%%%%%%
& \varpi_{1,2}(x)= \frac{\gamma _1 \left(3 \gamma _1 a^{[2]}_{0}-2 \left(\alpha _2+2 \beta _0 \tau+\beta _1 \left(\beta _1-\beta _3-2 \tau\right)\right)\right)+\alpha _1 \left(-\gamma _1+3 \gamma _2+\alpha _2 \left(\beta _1+\beta _2-2 \left(\beta _3+\tau\right)\right)\right)}{6 \gamma _1 \tau}\\
&+\frac{x \left(3 \gamma _1 a^{[2]}_{1}+2 \beta _1-2 \beta _3\right)}{6 \tau}\\
%%%%%%
& \varpi_{2,1}(x)= \frac{-3 \alpha _1 \beta _0 \gamma _2^2+\gamma _2 \gamma _1 \left(\alpha _1 \left(2 \beta _0-2 \beta _3+3 \left(\beta _1+\tau\right)\right)+6 \beta _0 \left(\beta _0-\beta _2\right) \tau\right)+\alpha _2 \beta _0 \left(3 \alpha _3 \gamma _1 \tau-\alpha _1 \gamma _2 \left(\beta _1+\beta _2-2 \beta _3+\tau\right)\right)-3 \alpha _3 \gamma _1^2 \tau}{9 \gamma _1^2 \gamma _2 \tau}\\
&+\frac{x \left(\alpha _2 \left(\alpha _1 \gamma _2 \left(\beta _1+\beta _2-2 \beta _3+\tau\right)-3 \alpha _3 \gamma _1 \tau\right)+3 \gamma _2 \left(\alpha _1 \left(\gamma _2-\gamma _1\right)+2 \left(\beta _2-\beta _0\right) \gamma _1 \tau\right)\right)}{9 \gamma _1^2 \gamma _2 \tau}\\
%%%%%%
& \varpi_{2,2}(x)=\frac{\alpha _1 \gamma _1 \left(\gamma _2 \left(-3 \gamma _1 a^{[2]}_{0}+7 \beta _0 \tau+2 \left(\beta _1^2+\beta _1 \left(\tau-\beta _3\right)-3 \beta _2 \tau\right)\right)+\alpha _2 \left(2 \gamma _2+3 \alpha _3 \tau\right)\right)+\alpha _1^2 \left(-\gamma _2\right) \left(-\gamma _1+3 \gamma _2+\alpha _2 \left(\beta _1+\beta _2-2 \beta _3+\tau\right)\right)-3 \gamma _1^2 \tau \left(\alpha _3 \beta _1-3 \gamma _2\right)}{9 \gamma _1^2 \gamma _2 \tau}\\
&+\frac{x \left(3 \alpha _3 \gamma _1 \tau-\alpha _1 \gamma _2 \left(3 \gamma _1 a^{[2]}_{1}+2 \beta _1-2 \beta _3+3 \tau\right)\right)}{9 \gamma _1 \gamma _2 \tau}
\end{align*}
\end{footnotesize}
\caption{\label{tab:table-1} Polynomial entries of matrix $\Phi(x)$ of Theorem \ref{a3=Ca2}.}
\end{table}
%%%%%%%%%%%%%%%%%
\end{landscape}
\section*{Acknowledgements}

\noindent  This work was partially supported by
 CMUP (UIDB/00144/2020), which is funded by FCT (Portugal).

%%%%%%%%%%%%%%%%%


\begin{thebibliography}{00}

\bibitem{Bochner} S. Bochner, \textit{\"Uber Sturm-Liouvillesche Polynomsysteme}, Math. Zeit. vol. 29 (1929), pp. 730-736.

\bibitem{Chihara} T. S. Chihara, \textit{An Introduction to Orthogonal Polynomials}, Gordon and Breach, New York, 1978.

\bibitem{C-VA-2001} E. Coussement and W. Van Assche , \textit{Some classical multiple orthogonal polynomials}  J. Comput. Appl. Math. 127, No. 1-2,  (2001), pp. 317-347.

\bibitem{C-VA-2006}  J. Coussement and W. Van Assche, \textit{Differential equations for multiple orthogonal polynomials with respect to classical weights: raising and lowering operators}  J. Phys. A: Math. Gen. 39, No. 13  (2006), pp. 3311-3318.

\bibitem{Douak-two-Laguerre} K. Douak, \textit{On 2-orthogonal polynomials of Laguerre type}, Int. J. Math. Math. Sci. Vol. 22, no.1 (1999), pp. 29-48.


\bibitem{KD-1996} K. Douak, \textit{The relation of the d-orthogonal polynomials to the Appell polynomials}, J. Comput. Appl. Math. 70(2) (1996), pp. 279-295.


\bibitem{KD-PM-2020} K. Douak and P. Maroni \textit{On a new class of 2-orthogonal polynomials, I: the recurrence relations and some properties}, Integral Transforms and Special Functions, (2020) DOI: 10.1080/10652469.2020.1811702

\bibitem{d-classical} K. Douak and P. Maroni, \textit{Une Caract\'erisation des polyn\^{o}mes $d$-orthogonaux "classiques"},  J. Approx. Theory 82 (1995), pp. 177-204.

\bibitem{Douak2classiques} K. Douak and P. Maroni, \textit{Les polyn\^{o}mes orthogonaux "classiques" de
dimension deux}, Analysis 12 (1992), pp. 71-107.

\bibitem{1997-KD-PM-1} K. Douak and P. Maroni, \textit{On d-orthogonal Tchebyshev polynomials, I },
Appl. Num. Math., 24 (1997), pp. 23-53.


\bibitem{1997-KD-PM-2} K. Douak and P. Maroni, \textit{On d-orthogonal Tchebyshev polynomials, II}, Methods and Applications of Analysis, 4 (4)(1997), pp. 404-429.

\bibitem{Hahn} W. Hahn, \textit{\"{U}ber die Jacobischen polynome und zwei verwandte polynomklassen}, Math. Zeit. 39 (1935), pp. 634-638.

\bibitem{Hahn2} W. Hahn, \textit{On differential equations for orthogonal polynomials}, Funkcialaj Ekvacioj 21 (1978), pp. 1-9.
%
%
\bibitem{Srivastava-Ben-Cheikh}  H. M. Srivastava, Y. Ben Cheikh, \textit{Orthogonality of some polynomial sets via quasi-monomiality}, Appl. Math. and Comput. 141 (2003), pp. 415-425.

\bibitem{theoriealgebrique} P. Maroni, \textit{Une th\'{e}orie alg\'{e}brique des polyn\^{o}mes orthogonaux.
Application aux polyn\^{o}mes orthogonaux semi-classiques}, in : C.
Brezinski et al., Eds., Orthogonal Polynomials and their
Applications, in: IMACS Ann. Comput. Appl. Math. 9 (Baltzer, Basel, 1991),
pp. 95-130.

\bibitem{variations} P. Maroni, \textit{Variations around classical orthogonal polynomials.
Connected problems},  J. Comput. Appl. Math. 48 (1993), pp. 133-155.

\bibitem{Maroni-Toulouse} P. Maroni, \textit{L'orthogonalit\'e et les r\'ecurrences de polyn\^omes d'ordre sup\'erieur \`a deux} , Ann. Fac. Sci. Toulouse, Math. (5) 10, No. 1 (1989), pp. 105-139.

\bibitem{Maroni-NumAlg} P. Maroni, \textit{Two-dimensional orthogonal polynomials, their associated sets and co-recursive sets}, Numer. Algorithms  3 (1992), pp. 299-312.

\bibitem{Maroni-Semi-classical} P. Maroni, \textit{Semi-classical character and finite-type relations between polynomial sequences},   Appl. Numer. Math. 31, No. 3  (1999), pp. 295-330.


\bibitem{PM-Korean-2005} P. Maroni, New results about orthogonality preserving maps, J. Korean Math. Soc. 42, No.2 (2005), pp. 243-254.


\bibitem{PM-TAM-2016} P. Maroni and T. A. Mesquita, \textit{Appell polynomial sequences with respect to some differential operators}, Period. Math. Hungar. 72, No.2 (2016), pp. 200-217.


\bibitem{PM-TAM-2019}  T. Augusta Mesquita and P. Maroni , \textit{Around operators not increasing the degree of polynomials} , Integral Transforms Spec. Funct. 30, No.5 (2019), pp. 383-399.

\bibitem{Pincherle}  S. Pincherle, \textit{M\'emoire sur le calcul fonctionnel distributif}, Mathematische Annalen 49 (1897), pp. 325-382.

\bibitem{Van-Iseg-1987} J. Van Iseghem, \textit{Vector orthogonal relations. Vector QD-algorithm}, J. Comput. Appl. Math. 19 (1987), pp. 141-150.


\end{thebibliography}
\end{document}